\newtheorem{theorem}{Theorem}[section]
\newtheorem{lemma}[theorem]{Lemma}
\newtheorem{proposition}[theorem]{Proposition}
\newtheorem{corollary}[theorem]{Corollary}
\theoremstyle{definition}
\newtheorem{definition}[theorem]{Definition}
\newtheorem{example}[theorem]{Example}
\newtheorem{question}[theorem]{Question}
\theoremstyle{remark}
\newtheorem{remark}[theorem]{Remark}
\numberwithin{equation}{section}
\newcommand{\notimplies}{%
  \mathrel{{\ooalign{\hidewidth$\not\phantom{=}$\hidewidth\cr$\implies$}}}}
\newcommand{\onto}{\xrightarrow{\textrm{onto}}}
\DeclareMathOperator{\re}{Re}
\DeclareMathOperator{\im}{Im}
\DeclareMathOperator{\diam}{diam}
\DeclareMathOperator{\dist}{dist}
\DeclareMathOperator{\disp}{disp}
\DeclareMathOperator{\dom}{dom}
\DeclareMathOperator{\sign}{sign}
\DeclareMathOperator{\med}{med}
\DeclareMathOperator{\Lip}{Lip}
\DeclareMathOperator{\dH}{d_H}
\title{Lipschitz means and mixers on metric spaces}
\author{Leonid V. Kovalev}
\address{215 Carnegie, Department of Mathematics, Syracuse University, Syracuse, NY 13244, USA}
\email{lvkovale@syr.edu}
\subjclass[2020]{Primary 51F30; Secondary 30L10, 54B20, 54C15, 54E40} 
\keywords{Metric space, Lipschitz mean, Lipschitz mixer,  bounded turning, quasisymmetric map, quasicircle}
\begin{document}
\baselineskip6mm
\maketitle

\begin{abstract} The standard arithmetic measures of center, the mean and median, have natural topological counterparts which have been widely used in continuum theory. In the context of metric spaces it is natural to consider the Lipschitz continuous versions of the mean and median. We show that they are related to familiar concepts of the geometry of metric spaces: the bounded turning property, the existence of quasisymmetric parameterization, and others. 
\end{abstract}

\section{Introduction} 

A \emph{mean} on a metric space $X$ is a continuous map $\mu\colon X^2\to X$ that is symmetric and idempotent: $\mu(a, b)=\mu(b, a)$ and $\mu(a, a)=a$ for all $a,b\in X$. The existence of means is a classical subject in continuum theory, initiated by Aumann~\cite{Aumann}; see~\cite[\S76]{IllanesNadler} for a survey. 
A \emph{Lipschitz mean} is also required to be Lipschitz continuous, thus bringing the metric on $X$ into play. While it is easy to see that every topological arc admits a mean, the corresponding question for Lipschitz means is not trivial even for rectifiable planar arcs. Indeed, we do not have a complete description of the metric arcs which admit a Lipschitz mean. The picture is more complete for a related notion of Lipschitz mixers. 
A \emph{mixer}~\cite{vanMill1} is a continuous map $\sigma\colon X^3\to X$ with the following \emph{absorption property}:   
\begin{equation}\label{eq-absorption}
\sigma(a, a, b)=\sigma(a, b, a)=\sigma(b, a, a)=a \quad \forall a, b\in X.    
\end{equation}
In the context of metric spaces, Lipschitz mixers are of interest because their existence nearly characterizes the Lipschitz retracts of Euclidean spaces (\cite{Hohti} and ~\cite[Theorem 2.12]{HeinonenLip}). We will prove the following result (see~\S\ref{sec-definitions} for definitions): 

\begin{theorem}\label{thm-BT-char-mix}
A metric arc admits a Lipschitz mixer if and only if it has bounded turning.
\end{theorem}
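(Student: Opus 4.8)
The plan is to prove the two implications separately: for ``bounded turning $\Rightarrow$ Lipschitz mixer'' I would build an explicit mixer from the order structure of the arc, and for the converse I would exploit the absorption identity together with connectedness of the arc.

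\emph{Bounded turning implies a Lipschitz mixer.} Fix a homeomorphism $\gamma\colon[0,1]\to X$ and write $\tilde p=\gamma^{-1}(p)$ for $p\in X$. I would take as mixer the \emph{median point} map $\sigma(a,b,c)=\gamma\bigl(\med(\tilde a,\tilde b,\tilde c)\bigr)$; equivalently, $\sigma(a,b,c)$ is the unique point lying on all three subarcs $[a,b]$, $[b,c]$, $[c,a]$, a description that does not depend on $\gamma$. The absorption property~\eqref{eq-absorption} is immediate because $\med(t,t,s)=t$, and continuity will follow once we have the Lipschitz bound. The heart of the matter is that bound, and I would reduce it to a one-variable estimate: with $b,c$ fixed, $d\bigl(\sigma(a,b,c),\sigma(a',b,c)\bigr)\le C\,d(a,a')$, where $C$ is the bounded turning constant; the general inequality then follows by changing one coordinate at a time. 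For the one-variable estimate, observe that $t\mapsto\med(t,\tilde b,\tilde c)$ is the ``clamp'' of $t$ to the interval with endpoints $\tilde b,\tilde c$, hence nondecreasing and $1$-Lipschitz; a short case analysis shows that the parameter interval between $\med(\tilde a,\tilde b,\tilde c)$ and $\med(\tilde a',\tilde b,\tilde c)$ is contained in the parameter interval between $\tilde a$ and $\tilde a'$. Applying $\gamma$, the chord $d\bigl(\sigma(a,b,c),\sigma(a',b,c)\bigr)$ is at most $\diam[a,a']$, which bounded turning bounds by $C\,d(a,a')$.

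\emph{A Lipschitz mixer implies bounded turning.} Suppose $\sigma$ is an $L$-Lipschitz mixer (with respect to, say, the $\ell^1$ product metric on $X^3$), and fix $a,b\in X$. Put $g(x)=\sigma(a,x,b)$. Then $g$ is continuous and, by~\eqref{eq-absorption}, $g(a)=\sigma(a,a,b)=a$ and $g(b)=\sigma(a,b,b)=b$. The key point is that~\eqref{eq-absorption} also gives $\sigma(a,x,a)=a$ and $\sigma(b,x,b)=b$ for \emph{every} $x\in X$, so that
\[
d(g(x),a)=d\bigl(\sigma(a,x,b),\sigma(a,x,a)\bigr)\le L\,d(a,b)\quad\text{and}\quad d(g(x),b)=d\bigl(\sigma(a,x,b),\sigma(b,x,b)\bigr)\le L\,d(a,b).
\]
Thus $g$ maps $X$ into the ball of radius $L\,d(a,b)$ about $a$ while fixing both $a$ and $b$. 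Since $X$ is connected, $g(X)$ is a connected subset of $X$ containing $a$ and $b$, and in an arc any such subset contains the whole subarc $[a,b]$ (deleting an interior point of $[a,b]$ separates $a$ from $b$). Hence $[a,b]$ lies in that ball, so $\diam[a,b]\le 2L\,d(a,b)$ and $X$ has bounded turning.

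I expect the only real work to be in the first direction: checking that the median point map is Lipschitz, which amounts to the ``clamp'' comparison of parameter intervals together with its degenerate cases (two inputs equal, or an input outside the interval between the other two). The converse is short once one spots the identity $\sigma(a,x,a)=a$, after which it is just the elementary fact that a connected subset of an arc meeting two of its points must contain the subarc between them; one should only take care that the particular product metric on $X^3$ affects the constants but nothing else.
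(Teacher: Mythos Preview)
Your proposal is correct and follows essentially the same route as the paper: the median map is the Lipschitz mixer in the bounded-turning direction, and the displacement estimate $d(\sigma(a,x,b),a)\le L\,d(a,b)$ together with connectedness yields the converse (the paper packages the latter as Proposition~\ref{path-conn-BT}). The only cosmetic difference is that the paper proves the median is Lipschitz by first showing $\min$ and $\max$ are Lipschitz on $\Gamma^2$ and then invoking $\med(a,b,c)=\max(\min(a,b),\min(b,c),\min(a,c))$, whereas you argue directly via the clamp comparison of parameter intervals.
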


Theorem~\ref{thm-BT-char-mix} is somewhat unexpected because the bounded turning property is invariant under quasisymmetric maps (Definition~\ref{def:qs-map}) which have no obvious reason to preserve Lipschitz mixers. A similar statement is true for metric circles, provided that the mixer is understood in a local sense (Definition~\ref{def-local-mixer}).  

\begin{theorem}\label{intro-quasicircles-mixers}
A metric circle admits a local Lipschitz mixer if and only if it has bounded turning.
\end{theorem}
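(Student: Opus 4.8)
The plan is to prove both implications by \emph{localizing} the arc case, Theorem~\ref{thm-BT-char-mix}, exploiting that bounded turning is a local property of a compact metric space. Concretely: if a metric circle $X$ is covered by finitely many closed sub-arcs $I_1,\dots,I_n$, each with bounded turning constant $\le H$, then $X$ has bounded turning. Indeed, let $\rho$ be a Lebesgue number for the open cover $\{I_k^{\circ}\}$. When $d(a,b)<\rho$ the pair $\{a,b\}$ lies in some $I_k$, and one of the two sub-arcs of $X$ joining $a$ and $b$ is a sub-arc of $I_k$, hence has diameter $\le H\,d(a,b)$; when $d(a,b)\ge\rho$ the inequality is trivial with constant $(\diam X)/\rho$. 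The same circle of ideas shows, conversely, that every sub-arc $J\subseteq X$ with $\diam J<\frac{1}{3}\diam X$ is a bounded turning arc with the same constant as $X$, since such a $J$ always realizes the shorter of the two sub-arcs joining any two of its points.

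\emph{Necessity.} Suppose $X$ carries a local Lipschitz mixer $\sigma$, defined and $L$-Lipschitz on a neighborhood $W$ of the diagonal of $X^3$. Since the diagonal is compact, there is $r>0$ with $\{(a,b,c):\diam\{a,b,c\}\le r\}\subseteq W$. For each $p\in X$ pick a closed sub-arc $I_p\ni p$ with $\diam I_p\le r$, so that $I_p^3\subseteq W$; then $\sigma|_{I_p^3}$ is an $L$-Lipschitz map $I_p^3\to X$ with the absorption property. The proof of the necessity half of Theorem~\ref{thm-BT-char-mix} is local in nature: it bounds the diameter of the sub-arc $[a,b]_{I_p}$ in terms of $d(a,b)$ by evaluating the mixer only on triples drawn from $[a,b]_{I_p}\subseteq I_p$, and never requires the mixer to take values in $I_p$ itself. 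Hence each $I_p$ has bounded turning with a constant $H=H(L)$ independent of $p$, finitely many of the arcs $I_p^{\circ}$ cover $X$, and the local-to-global principle above finishes this direction.

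\emph{Sufficiency.} Let $X$ be an $H$-bounded turning circle. We construct a local Lipschitz mixer directly, as an \emph{arc-median}, without invoking the quasisymmetric parametrization of $X$. A bounded turning estimate produces $\rho>0$ such that every triple $\{a,b,c\}$ with $\diam\{a,b,c\}<\rho$ is contained in a sub-arc of $X$ of diameter less than $\frac{1}{3}\diam X$; moreover, any two such sub-arcs intersect in a sub-arc still containing $\{a,b,c\}$, so the linear order that $a,b,c$ inherit — in particular the middle one of the three — is independent of the chosen sub-arc. Define $\sigma(a,b,c)$ on the open neighborhood $\Omega=\{\diam\{a,b,c\}<\rho\}$ of the diagonal to be this middle element. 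Absorption is clear, the middle element of $\{a,a,b\}$ being $a$. For the Lipschitz bound, a chaining argument reduces matters to perturbing a single coordinate, and then the two triples involved lie in a common short sub-arc $J$; since $J$ is an $H$-bounded turning arc by the first paragraph, it suffices to know that the arc-median is Lipschitz on a bounded turning arc. That last fact follows from a short case analysis: the middle element changes only at a ``crossing'', and the bounded turning inequality for $J$ bounds the resulting jump by a fixed multiple of the size of the perturbation. (In particular this also gives a proof of the ``if'' part of Theorem~\ref{thm-BT-char-mix}.) Thus $\sigma$ is a local Lipschitz mixer on $X$.

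The step I expect to be the main obstacle is the well-definedness of the arc-median in the sufficiency direction — the point where the circle's topology and the bounded turning property really enter. One must show that three pairwise-close points of a bounded turning circle cannot be spread around the circle but lie in a canonical short sub-arc, and that the two candidate short sub-arcs joining the extreme pair agree on which side the remaining point lies; together with the uniformity of the bounded turning constant over all short sub-arcs (needed to extract a single Lipschitz constant for $\sigma$ on all of $\Omega$), this is the technical core. The remainder is bookkeeping with Lebesgue numbers and transcription of the arc case.
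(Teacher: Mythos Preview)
Your approach is essentially the paper's: necessity via the curve-shortening idea of Proposition~\ref{path-conn-BT} (the paper runs a compactness argument rather than your covering, but the core is the same), and sufficiency via the arc-median, which is exactly the paper's construction in the proof of \eqref{prop-QS}$\implies$\eqref{prop-local} of Theorem~\ref{thm-quasicircles-mixers}.

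There is, however, a genuine gap in your Lipschitz verification for the arc-median. You write that ``a chaining argument reduces matters to perturbing a single coordinate.'' But the domain $\Omega$ is not a product set: given $(a,b,c),(a',b',c')\in\Omega$, the intermediate triples $(a',b,c)$ and $(a',b',c)$ need not lie in $\Omega$, so the arc-median may not even be defined along your chain. The paper explicitly flags this pitfall (``the Lipschitz estimate~\eqref{eq-Lip-property} for (semi)local mixers does not reduce to the case when only one of three variables changes'') and circumvents it differently: one first observes that the Lipschitz inequality is trivial with constant $(\diam X)/\rho$ unless $d(a,a')+d(b,b')+d(c,c')<\rho$, and in that remaining case all six points sit in a common short sub-arc $\widetilde\gamma$ of diameter at most $\tfrac13\diam X$. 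On $\widetilde\gamma$ the median is defined globally on $\widetilde\gamma^3$, so Proposition~\ref{BT-char} applies directly---or, equivalently, your chaining becomes legitimate because it now takes place entirely inside $\widetilde\gamma^3$. With this preliminary reduction inserted, the rest of your argument is correct.

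A minor remark on necessity: the curve produced by Proposition~\ref{path-conn-BT} lies in $X$, not in $I_p$, so it does not directly show that $I_p$ has bounded turning. What it shows is that the image of a path from $a$ to $b$ in $X$ has small diameter; one then needs the (easy) topological fact that such an image contains one of the two sub-arcs of $X$ between $a$ and $b$, and a comparison of diameters forces this to be $[a,b]_{I_p}$ when $\diam I_p$ is small. The paper's invocation of Proposition~\ref{path-conn-BT} is equally elliptical on this point.
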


If a metric circle (resp. arc) is embedded in a Euclidean space, its bounded turning property is equivalent to being a quasisymmetric image of $S^1$ (resp. $[0, 1]$)~\cite{TukiaVaisala}. 

In general, a path-connected metric space with a Lipschitz mixer has bounded turning but the converse is false (Section~\ref{sec-metric-intervals}). 
In contrast, the existence of a Lipschitz mean does not imply the bounded turning property, even for spaces homeomorphic to $\mathbb R$. A class of counterexamples is provided by the following theorem.  

\begin{theorem}\label{two-quasiarcs}
Suppose that $f\colon \mathbb R\to\mathbb R$ is a continuous even function which is increasing for $x>0$. Then its graph $\Gamma=\{(x,f(x))\colon x\in\mathbb R\}\subset \mathbb R^2$ has a Lipschitz mean. 
\end{theorem}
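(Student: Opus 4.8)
The plan is to exhibit an explicit Lipschitz mean on $\Gamma$. Parametrize $\Gamma$ by the $x$-coordinate, so that its points are $(a,f(a))$, $a\in\mathbb R$, with the metric inherited from $\mathbb R^2$, and write $o=(0,f(0))$. Since $f$ is even and nondecreasing on $[0,\infty)$, one has $f(\min(|a|,|b|))=\min(f(|a|),f(|b|))=\min(f(a),f(b))$ for all $a,b\in\mathbb R$, and the two coordinate projections are $1$-Lipschitz on $\Gamma$; these are essentially the only facts the argument will use. The naive candidate mean — transporting the midpoint of $\mathbb R$, i.e.\ $(a,f(a)),(b,f(b))\mapsto\bigl((a+b)/2,f((a+b)/2)\bigr)$ — is a perfectly good continuous mean but need not be Lipschitz, because it evaluates $f$ at the new point $(a+b)/2$, where $f$ can be arbitrarily steep compared with its behaviour near $a$ and $b$ (this is exactly what happens when $\Gamma$ fails to have bounded turning). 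The remedy is to replace the average by a lattice operation, which invokes $f$ only at the given arguments.

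Concretely, I would define
\[
 \mu\bigl((a,f(a)),(b,f(b))\bigr):=\bigl(t,f(t)\bigr),\qquad
 t:=\begin{cases}\ \min(|a|,|b|) & \text{if }\max(a,b)\ge 0,\\ \ \max(a,b) & \text{if }\max(a,b)<0.\end{cases}
\]
The point $(t,f(t))$ lies on $\Gamma$ by construction, and in both cases $|t|=\min(|a|,|b|)$, hence $f(t)=\min(f(a),f(b))$. Symmetry is clear; idempotency holds since for $a=b$ the first branch gives $t=a$ when $a\ge0$ and the second gives $t=a$ when $a<0$; and $\mu$ is continuous because the two branches agree (both equal $0$) on $\{\max(a,b)=0\}$ while each is continuous, so $t$, and therefore $\mu$, depends continuously on $(a,b)$.

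The substance is the Lipschitz estimate. For $p=(a,f(a))$, $q=(b,f(b))$, $p'=(a',f(a'))$, $q'=(b',f(b'))$, with $t,t'$ the corresponding values, $|\mu(p,q)-\mu(p',q')|^2=(t-t')^2+(f(t)-f(t'))^2$. As $f(t)=\min(f(a),f(b))$ equals the smaller of the two $y$-coordinates (and likewise for $t'$), and $\min$ is $1$-Lipschitz in the sup-norm while the $y$-projection is $1$-Lipschitz on $\Gamma$, we get $|f(t)-f(t')|\le\max(|p-p'|,|q-q'|)$. For $(t-t')^2$ I would distinguish two cases. If $tt'\ge 0$, then $|t-t'|=\bigl||t|-|t'|\bigr|=\bigl|\min(|a|,|b|)-\min(|a'|,|b'|)\bigr|\le\max(|a-a'|,|b-b'|)\le\max(|p-p'|,|q-q'|)$, again by $1$-Lipschitzness of $\min$ and of the $x$-projection. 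If $tt'<0$, say $t>0>t'$, then $t'<0$ forces $a'<0$ and $b'<0$ (the only way the second branch produces a negative value), while $t>0$ forces $\max(a,b)\ge 0$, i.e.\ $a\ge 0$ or $b\ge 0$; assuming $a\ge 0$ (the case $b\ge 0$ is identical after exchanging $(p,p')$ with $(q,q')$), we have $|a|+|a'|=|a-a'|\le|p-p'|$, so $|t-t'|=|t|+|t'|=\min(|a|,|b|)+\min(|a'|,|b'|)\le|a|+|a'|\le|p-p'|$. In every case $|t-t'|\le\max(|p-p'|,|q-q'|)$, whence $|\mu(p,q)-\mu(p',q')|\le\sqrt2\,\max(|p-p'|,|q-q'|)$, and $\mu$ is a Lipschitz mean on $\Gamma$.

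The one genuinely nontrivial step is finding this formula: one must recognize that on such a $\Gamma$ a Lipschitz mean cannot average the $x$-coordinates (the slope of $f$ is unbounded) and has to be built from $\min$ and $\max$, and that the sign appearing in the definition of $t$ must be chosen so that $\mu$ remains continuous across the line $x=0$ where the two halves of $\Gamma$ are joined — it is precisely the freedom to send a straddling pair to either side there that makes the construction go through. Once the formula is in hand, each verification reduces to the short computation above.
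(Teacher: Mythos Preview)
Your proof is correct, and your mean is exactly the paper's: writing $t=\min(|a|,|b|)\cdot\sign\max(a,b)$ reproduces your two branches (when $\max(a,b)<0$ one has $-\min(|a|,|b|)=\max(a,b)$). The paper's Lipschitz verification is organized differently---it abstracts to a metric on $\mathbb R$ satisfying three axioms and argues one variable at a time---but your direct coordinate-wise estimate using the $1$-Lipschitz property of $\min$ and of the two projections is a clean variant of the same idea, and in fact yields the sharper constant $\sqrt{2}$.
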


In Section~\ref{sec-existence-means} we will find  obstructions to the existence of Lipschitz means, which imply that even some rectifiable planar arcs do not admit such a mean. Section~\ref{sec-mixers-circles} concerns the local forms of mixers and means, which are natural in the context of spaces with nontrivial topology. The paper concludes with open problems in Section~\ref{sec-remarks}. 

\section{Definitions and preliminary results}\label{sec-definitions}

When $(X, d)$ is a metric space, its Cartesian powers $X^n$ are equipped with the metric $d(a, b)=\sum_{k=1}^n d(a_k, b_k)$. A map $f\colon X\to Y$ is \emph{Lipschitz} if there exists a constant $L$ ($L=\Lip f$) such that $d_Y(f(a), f(b))\le Ld_X(a,b)$ for all $a,b\in X$. We will often use the fact that a map $f\colon X^n\to Y $ satisfies the Lipschitz condition with a constant $L$ if and only if it does so with respect to each variable separately. In the following, $B^n$ is the closed unit ball in $\mathbb R^n$ and $S^{n-1}$ is its boundary. 

\begin{theorem}\cite[Theorem 2.1]{Hohti}\label{thm-LC-Hohti} 
Suppose $X$ is a metric space with a Lipschitz mixer $\mu$, and $n\ge 2$. Then every Lipschitz map $f\colon S^{n-1}\to X$ extends to a Lipschitz map $F\colon B^n\to X$. Moreover, $\Lip F\le K \Lip f$ where $K$ depends only on $\Lip \mu$ and $n$. 
\end{theorem}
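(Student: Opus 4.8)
After rescaling the metric I may assume $\Lip f = 1$, so the goal is $\Lip F \le K(n,\Lip\sigma)$. The first move is a reduction: since $B^n$ is connected, any continuous extension of $f$ takes values in the path component $Y$ of $X$ that contains $f(S^{n-1})$, so I would replace $X$ by $Y$. A short argument shows $Y$ is closed under $\sigma$ — given $a,b,c\in Y$, a path in $X$ from $a$ to $b$ stays in $Y$ and, pushed through $\sigma(a,\cdot,c)$, produces a path from $\sigma(a,a,c)=a$ to $\sigma(a,b,c)$ — so $Y$ carries the restricted Lipschitz mixer and is path connected; by the facts recalled in the introduction, $Y$ then has bounded turning. (If $X$ is totally disconnected, such as $\{0,1\}$ with the coordinatewise median, then $Y$ is a single point and the theorem is trivial; the content of the theorem is really about this path-connected, bounded-turning situation.) I would also pass to a bi-Lipschitz model, e.g.\ replacing $(B^n,S^{n-1})$ by $([0,1]^n,\partial[0,1]^n)$, so that the problem becomes one about a triangulation.

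Fix a triangulation $\mathcal T$ of $B^n$ whose trace on $S^{n-1}$ triangulates the sphere, which has bounded geometry, and whose mesh decays geometrically toward $S^{n-1}$. I would build $F$ by extending over the skeleta of $\mathcal T$. To a vertex $v\in S^{n-1}$ assign $f(v)$; to an interior vertex $v$ assign a point of $Y$ within $O(\dist(v,S^{n-1}))$ of $f$ at the nearest boundary vertex, arranged so that adjacent vertices receive values at distance $O(h)$, where $h$ is the local mesh. Over each edge, join the two vertex values by a connected set of comparable diameter (bounded turning), parametrized with Lipschitz constant $O(1)$; since adjacent values are $O(h)$-close this is legitimate at the right scale. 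Over the $2$- and higher-dimensional skeleta I fill inductively using the mixer: each simplex carries a Lipschitz map on its boundary built from $O(1)$-close vertex data, and a bounded-depth iteration of $\sigma$ on the vertex values (refined by repeated subdivision so that the images shrink and a continuous limit exists) fills the interior, the absorption property \eqref{eq-absorption} being exactly what makes the fillings of two simplices sharing a face agree on that face. Because the mesh shrinks at the boundary, $F(x)\to f(x)$ as $x\to S^{n-1}$, so $F$ is a continuous extension of $f$.

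It remains to estimate $\Lip F$, by a case analysis on $x,x'\in B^n$ according to whether $|x-x'|$ is large or small compared with $\dist(x,S^{n-1})$: in the first case one compares $F(x),F(x')$ to $f$ at nearby boundary points, and in the second one tracks how the finitely many simplices, the vertex data, the bounded-turning arcs and the simplex-fillings through $x$ and $x'$ move, each by $O(|x-x'|)$, and propagates this through the bounded-depth mixer trees. I expect the main obstacle to be precisely this estimate: a priori the bounded-turning arcs contribute a factor depending on the bounded-turning constant, the $\sigma$-fillings contribute a power $(\Lip\sigma)^{O(n)}$, and one must organize the induction over skeleta and the refinement near $S^{n-1}$ so that these factors are absorbed once and do not compound from cell to cell. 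The key to making the telescoping work is to use the full absorption property $\sigma(a,a,b)=\sigma(a,b,a)=\sigma(b,a,a)=a$ — not just the Lipschitz continuity of $\sigma$ — so that an input is continuously discarded as its weight vanishes and the fillings are genuinely consistent across faces; this consistency is what keeps the Lipschitz constant from degrading under refinement. An argument along these lines is carried out in \cite{Hohti}.
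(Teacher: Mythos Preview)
The paper does not give its own proof of this theorem: it is quoted from \cite{Hohti}, and the paragraph following the statement simply remarks that the compactness hypothesis in Hohti's original formulation is unnecessary and points to \cite[\S2.6]{HeinonenLip} for a proof in the general case. So there is nothing in the paper to compare your argument against.

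That said, your outline diverges substantially from the arguments in the cited sources, and it has a real gap. The proofs in \cite{Hohti} and \cite{HeinonenLip} are direct and formulaic: for $n=2$, one writes $f\colon [0,1]\to X$ with $f(0)=f(1)$ and sets $g(s,t)=\sigma(f(0),f(s),f(t))$ on the square; the absorption property forces $g$ to be constant on $\partial[0,1]^2$ and to equal $f$ on the diagonal, so one triangle of the square, with two sides collapsed, is the desired Lipschitz disk. Higher $n$ is handled by induction with an equally explicit formula. No triangulation, no reduction to a path component, no bounded-turning step.

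Your scheme, by contrast, relies on bounded turning to produce the $1$-skeleton extension: you propose to join adjacent vertex values by ``a connected set of comparable diameter (bounded turning), parametrized with Lipschitz constant $O(1)$''. But Definition~\ref{def-metric-BT} only supplies a compact connected set of controlled diameter, not a rectifiable arc, and certainly not a Lipschitz parametrization; a connected set can fail to be path-connected, and even a path can have arbitrarily large length relative to its diameter. So the edge-filling step is unjustified as written. The subsequent filling of higher-dimensional cells (``a bounded-depth iteration of $\sigma$ \dots\ refined by repeated subdivision so that the images shrink'') is also only a sketch, and you yourself flag the Lipschitz estimate as the anticipated difficulty before deferring to \cite{Hohti}. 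If you want a self-contained argument, the direct mixer formula above is both shorter and avoids these issues entirely.
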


In the language of Lipschitz homotopy groups $\pi_m^{\Lip}$~\cite{HST, WengerYoung}, Theorem~\ref{thm-LC-Hohti} says that every metric space $X$ with a Lipschitz mixer has trivial groups $\pi_m^{\Lip}(X)$ for $m\ge 1$. The statement of~\cite[Theorem 2.1]{Hohti} includes the assumption that $X$ is compact, but it was only relevant because $\mu$ was assumed to be a local mixer (Definition~\ref{def-local-mixer}). A proof without the compactness assumption is given in~\cite[\S2.6]{HeinonenLip}. The statement of Theorem~\ref{thm-LC-Hohti} is false for $n=1$~\cite[\S4]{Hohti}. 

While a mean is required to be symmetric (otherwise $\mu(a, b)=a$ would qualify as a mean), this requirement is not imposed on mixers. However, explicit examples of mixers, such as the coordinate-wise median map
\begin{equation}\label{eq-coordinate-median}
\sigma(x, y, z) = (\med(x_i, y_i, z_i))_{i=1}^n,\quad x, y, z\in \mathbb R^n,    
\end{equation}
are often symmetric with respect to permutations of all three variables. Such mixers are called \emph{symmetric}. Symmetric mixers with additional algebraic properties are studied in the theory of median spaces  (e.g.,~\cite{vandeVel}). 

While the existence of a mean places strong constraints on the topology of the space~\cite{Eckmann, Sobolewski}, it does not force the space to be contractible. The following example of a non-simply-connected space with a Lipschitz mean is a special case of~\cite[Example 1.7]{Keesling1972}. 

\begin{example}\label{Keesling-example}  Let $H=\mathbb Z/(3\mathbb Z)$ and let $H^*$ be the space of all measurable functions $f\colon [0,1]\to H$, identifying the functions that agree a.e. This  space has a metric $d(f, g)=|\{x\in [0, 1]\colon f(x)\ne g(x)\}| $ where $|\cdot |$ is the Lebesgue measure. It admits a mean $\mu(f, g)=2f+2g$ which is $1$-Lipschitz in each argument. 

Let $X=H^*/H$ be the quotient of $H^*$ by the subspace of constant functions, i.e., the quotient by the isometric action $(h, f)\mapsto f+h$ where $h\in H$ and $f\in H^*$. Then $X$ is a metric space whose fundamental group is isomorphic to $H$~\cite[Theorem 2.3]{Keesling1973}. The mean $\mu$ induces a mean $\widetilde{\mu}$ on $X$, which is also Lipschitz because the action of $H$ on $H^*$ is isometric.  
\end{example}

Although the circle $S^1$ does not admit a  mean~\cite{Aumann}, the local forms of means and mixers are available on it. Mixers can be localized in two ways, depending on whether their domain is a set of the form 
\begin{equation}\label{eq-diagonal-nbhd}
\Delta_r(X^3) := \{(x_1, x_2, x_3)\in X^3\colon \max_{i<j} d(x_i, x_j)\le r\}    
\end{equation}
or of the form 
\begin{equation}\label{eq-diagonal-aug-nbhd}
\widetilde{\Delta}_r(X^3) := \{(x_1, x_2, x_3)\in X^3\colon \min_{i<j} d(x_i, x_j)\le r\}.    
\end{equation}
The former approach was taken in~\cite{Hohti}, the latter in~\cite{vanMill2} (see Lemma 2.3 in~\cite{vanMill2}). To clarify this distinction, we define \emph{local} and \emph{semilocal} mixers. 

\begin{definition}\label{def-local-mixer} 
A metric space $X$ admits a \emph{local Lipschitz mixer} if there exists $r> 0$ and a Lipschitz map $\sigma\colon \Delta_r(X^3)\to X$ with the absorption property~\eqref{eq-absorption}. 

If $\Delta_r(X^3)$ is replaced by $\widetilde{\Delta}_r(X^3)$ in the previous paragraph, we say that $\sigma$ is a \emph{semilocal Lipschitz mixer}. 
\end{definition}

Thus, a local mixer acts on three-point subsets of small diameter, while a semilocal mixer acts on three-point subsets of small minimal separation between points. 

A (Lipschitz) retract of a metric space $X$ is a subset $E\subset X$ for which there exists a (Lipschitz) continuous idempotent map $f\colon X\to E$. It is easy to see that Lipschitz mixers are inherited by Lipschitz retracts. The (semi)local mixers are also inherited by Lipschitz \emph{neighborhood} retracts. The proof of this fact is based on a simple property of Lipschitz mixers: 
\begin{equation}\label{eq-mixer-displacement}
d(\sigma(a, b, c), a)\le \Lip(\sigma)\,d(a, b)
\end{equation}
which follows from $\sigma(a, a, c)=a$. The same holds after any permutation of the arguments of $\sigma$, e.g., $d(\sigma(a, b, c), b)\le \Lip(\sigma)\,d(b, c)$.

\begin{lemma}\label{LNR-implies-mixer} 
Suppose that $X$ is a metric space with a semilocal (resp. local) Lipschitz mixer. If a closed set $E\subset X$ is a Lipschitz retract of an open set $U\subset X$ such that $\dist(E, X\setminus U)>0$, then $E$ admits a semilocal (resp. local) Lipschitz mixer. 
\end{lemma}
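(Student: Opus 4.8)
The plan is to post-compose the given mixer on $X$ with a Lipschitz retraction onto $E$, after first shrinking its radius so that the mixer's values cannot escape $U$. Write $\rho\colon U\to E$ for a Lipschitz retraction, and let $\sigma$ be the given semilocal (resp.\ local) Lipschitz mixer, with domain radius $r>0$. Since $\delta:=\dist(E,X\setminus U)>0$, I would fix $r'\le r$ small enough that $(\Lip\sigma)\,r'<\delta$.

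The first step is to check that $\sigma$ maps the relevant diagonal neighborhood of $E^3$ into $U$. In the semilocal case, given $(x_1,x_2,x_3)\in\widetilde{\Delta}_{r'}(E^3)$, note that $r'\le r$ puts this triple in $\widetilde{\Delta}_r(X^3)$, so $\sigma(x_1,x_2,x_3)$ is defined; choosing indices $i\neq j$ with $d(x_i,x_j)=\min_{p<q}d(x_p,x_q)\le r'$ and invoking the permuted form of the displacement estimate~\eqref{eq-mixer-displacement} attached to that pair of coordinates gives $d(\sigma(x_1,x_2,x_3),x_i)\le(\Lip\sigma)\,d(x_i,x_j)\le(\Lip\sigma)\,r'<\delta$. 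Since $x_i\in E$, this forces $\sigma(x_1,x_2,x_3)\in U$. In the local case one argues the same way, using that every pairwise distance in $\Delta_{r'}(E^3)$ is at most $r'$, whence $d(\sigma(x_1,x_2,x_3),x_1)\le(\Lip\sigma)\,d(x_1,x_2)\le(\Lip\sigma)\,r'<\delta$ directly.

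Next I would set $\sigma'=\rho\circ\sigma$ on $\widetilde{\Delta}_{r'}(E^3)$ (resp.\ $\Delta_{r'}(E^3)$). By the previous step this is well defined, $E$-valued, and Lipschitz with constant at most $(\Lip\rho)(\Lip\sigma)$. For absorption, fix $a,b\in E$ for which the triples $(a,a,b)$, $(a,b,a)$, $(b,a,a)$ lie in the domain of $\sigma'$; in the semilocal case this is automatic, since each of these triples has a repeated coordinate, while in the local case it holds once $d(a,b)\le r'$, which also places the triples in $\Delta_r(X^3)$ because $r'\le r$. In either situation the triples lie in the domain of $\sigma$, so $\sigma(a,a,b)=\sigma(a,b,a)=\sigma(b,a,a)=a$ by~\eqref{eq-absorption}, and applying $\rho$, which fixes $a\in E$, yields the absorption property for $\sigma'$. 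This exhibits $\sigma'$ as a semilocal (resp.\ local) Lipschitz mixer on $E$.

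The only delicate point is the bookkeeping in the semilocal case: a mixer is not assumed symmetric, so one cannot relabel the triple to move the $r'$-close pair into a standard position, and must instead apply exactly the permuted version of~\eqref{eq-mixer-displacement} corresponding to that pair. Everything else reduces to the routine facts that a composition of Lipschitz maps is Lipschitz and that post-composing with a retraction preserves the absorption identities; the role of the hypothesis $\dist(E,X\setminus U)>0$ is precisely to make the first composition available on a diagonal neighborhood of fixed positive radius.
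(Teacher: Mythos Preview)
Your proof is correct and follows exactly the approach in the paper: shrink the domain radius so that the displacement estimate~\eqref{eq-mixer-displacement} forces $\sigma$ to land in $U$, then post-compose with the retraction. Your write-up is in fact more careful than the paper's, which simply asserts that ``for sufficiently small $\rho\in(0,r)$, the mixer $\sigma$ maps $\widetilde{\Delta}_{\rho}(E^3)$ into $U$ by virtue of~\eqref{eq-mixer-displacement}'' without spelling out the permutation bookkeeping you rightly flag in the semilocal case.
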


\begin{proof} Suppose $\sigma\colon \widetilde{\Delta}_r(X^3) \to X$ is a semilocal mixer. For sufficiently small $\rho\in (0, r)$, the mixer $\sigma$ maps $\widetilde{\Delta}_{\rho}(E^3)$ into $U$ by virtue of~\eqref{eq-mixer-displacement}.
Composing $\sigma$ with a Lipschitz retraction of $U$ onto $E$ yields a map from $\widetilde{\Delta}_{\rho}(E^3)$ to $E$ which is easily seen to be a semilocal Lipschitz mixer. 
The same proof works when $\sigma\colon {\Delta}_r(X^3) \to X$ is a local mixer. 
\end{proof}

The local form of Lipschitz means involves neighborhoods of the diagonal, $\Delta_r(X^2)=\{(a,b)\in X^2\colon d(a, b)\le r\}$. 

\begin{definition}\label{def-local-mean} 
A metric space $X$ admits a \emph{local Lipschitz mean} if there exists $r> 0$ and a Lipschitz map $\mu\colon \Delta_r(X^2)\to X$ such that $\mu(a,a)=a$ and $\mu(a, b)=\mu(b, a)$ for all $(a, b)\in \Delta_r(X^2)$.  
\end{definition}

Repeating the argument from the proof of Lemma~\ref{LNR-implies-mixer}, we obtain the following lemma.

\begin{lemma}\label{LNR-implies-mean} 
Suppose that $X$ is a metric space with a local Lipschitz mean. If a closed set $E\subset X$ is a Lipschitz retract of an open set $U\subset X$ such that $\dist(E, X\setminus U)>0$, then $E$ admits a local Lipschitz mean. 
\end{lemma}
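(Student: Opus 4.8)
The plan is to follow the proof of Lemma~\ref{LNR-implies-mixer} essentially verbatim, replacing the mixer domains $\widetilde{\Delta}_r(X^3)$ or $\Delta_r(X^3)$ by the diagonal neighborhood $\Delta_r(X^2)$ of the mean. So suppose $\mu\colon \Delta_r(X^2)\to X$ is a local Lipschitz mean on $X$ with constant $L=\Lip\mu$, let $E\subset X$ be closed, let $U\supset E$ be open with $\delta:=\dist(E, X\setminus U)>0$, and let $\pi\colon U\to E$ be an $L'$-Lipschitz retraction.

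First I would bound the displacement of $\mu$: from $\mu(a,a)=a$ and the Lipschitz condition in the second variable, $d(\mu(a,b),a)=d(\mu(a,b),\mu(a,a))\le L\,d(a,b)$ for every $(a,b)\in\Delta_r(X^2)$ — this is the analogue of~\eqref{eq-mixer-displacement} and is exactly why we only needed $\mu(a,a)=a$, not full symmetry, at this step. Next, choose $\rho\in(0,r)$ small enough that $L\rho<\delta$. Then for any $(a,b)\in\Delta_\rho(E^2)$ we have $d(a,b)\le\rho<r$, so $\mu(a,b)$ is defined, and $d(\mu(a,b),a)\le L\rho<\delta$, which forces $\mu(a,b)\in U$ since $a\in E$ and points within distance $\delta$ of $E$ lie in $U$. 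Hence $\mu$ restricts to a Lipschitz map $\Delta_\rho(E^2)\to U$, and we may define $\widetilde\mu:=\pi\circ\mu\colon \Delta_\rho(E^2)\to E$.

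It remains to check $\widetilde\mu$ is a local Lipschitz mean on $E$. It is Lipschitz as a composition of Lipschitz maps (with constant at most $L'L$). For idempotency, if $a\in E$ then $\mu(a,a)=a\in E$, so $\pi(\mu(a,a))=\pi(a)=a$. For symmetry, $\mu(a,b)=\mu(b,a)$ on $\Delta_\rho(E^2)$ because this already holds on $\Delta_r(X^2)\supset\Delta_\rho(E^2)$, and applying $\pi$ preserves the equality; thus $\widetilde\mu(a,b)=\widetilde\mu(b,a)$. This completes the argument.

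I do not expect any real obstacle here: the statement is explicitly flagged in the text as "repeating the argument from the proof of Lemma~\ref{LNR-implies-mixer}", and the only things to verify are the bookkeeping for the displacement bound and that the symmetry and idempotency conditions survive composition with the retraction. The one point worth a sentence of care is the choice of $\rho$, ensuring simultaneously that $\rho<r$ (so $\mu$ is defined on $\Delta_\rho(E^2)$) and $L\rho<\dist(E,X\setminus U)$ (so the image lands in $U$); since $\mu(a,a)=a$ gives the displacement estimate with no symmetry needed, nothing in the mixer proof fails to transfer.
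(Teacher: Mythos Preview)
Your proof is correct and is exactly the argument the paper intends: the text itself says the lemma is obtained by ``repeating the argument from the proof of Lemma~\ref{LNR-implies-mixer}'', and you have carried out precisely that repetition with the appropriate replacement of $\Delta_r(X^3)$ by $\Delta_r(X^2)$, together with the verification that symmetry and idempotency survive composition with the retraction.
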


\begin{definition}\label{def-metric-BT} A metric space $X$ has \emph{bounded turning}  if there exists a constant $C$ such that any two points $a, b\in X$ are contained in some compact connected set $E\subset X$ with $\diam E \le Cd(a, b)$. 
\end{definition}

\begin{definition}\label{def-metric-doubling} A metric space $X$ is \emph{doubling}  if there exists a constant $N$ such that any ball in $X$ can be covered by at most $N$ balls of half the radius. 
\end{definition}

By a \emph{metric interval} we mean a metric space $(\Gamma, d)$ that is homeomorphic to a nontrivial interval of the real line $\mathbb R$. Such spaces form three homeomorphism classes: lines, rays, and arcs, based on being homeomorphic to $\mathbb R$, $[0, \infty)$, or $[0,1]$. A \emph{metric circle} is a metric space homeomorphic to the circle $S^1$. In the context of metric intervals and circles, the bounded turning property amounts to an upper bound on the diameter of a subarc between any two given points $a, b$. 

By a theorem of Tukia and V\"ais\"al\"a~\cite{TukiaVaisala}, a metric circle $\Gamma$ admits a quasisymmetric parametrization by $S^1$ if and only if it has both bounded turning property and the doubling property. The same holds for quasisymmetric parameterization of metric arcs by $[0, 1]$. 

\begin{definition}\label{def:qs-map}  A homeomorphism $f\colon X\onto Y$ is \emph{quasisymmetric} if there exists a homeomorphism $\eta \colon [0, \infty) \to [0, \infty)$ such that for any three distinct points $x, u, v$ in $X$ we have
\begin{equation}\label{eq-def-qs-map}
\frac{d_Y(f(x), f(u))}{d_Y(f(x), f(v))}
\le \eta\left( \frac{d_X(x, u)}{d_X(x, v)}\right). 
\end{equation}
\end{definition}

For example, if there exists $L$ such that $\Lip f\le L$ and $\Lip f^{-1} \le L$ (i.e., $f$ is \emph{bi-Lipschitz}), then~\eqref{eq-def-qs-map} holds with $\eta(t)=L^2t$. An example of a quasisymmetric map that is not bi-Lipschitz is~\cite[\S6.5]{Vaisala1990} 
\begin{equation}\label{eq-power-map}
f(z)=|z|^{\alpha-1}z,\quad z\in \mathbb C,\quad  (f(0)=0)        
\end{equation}
where $\alpha>0$ and $\alpha\ne 1$. 

Lipschitz means and mixers are obviously preserved by bi-Lipschitz maps. More generally, they are preserved by quasihomogeneous maps, which lie between the  bi-Lipschitz and quasisymmetric classes.

\begin{definition}\label{def:qh-map} \cite{GhamsariHerron,HerronMeyer}   A homeomorphism $f\colon X\onto Y$ is \emph{quasihomogeneous} if there exists a homeomorphism $\eta \colon [0, \infty) \to [0, \infty)$ such that for any four distinct points $x_1, \dots, x_4$ in $X$ we have
\begin{equation}\label{eq:qh-map}
\frac{d_Y(y_1, y_2)}{d_Y(y_3, y_4)}
\le \eta\left( \frac{d_X(x_1, x_2)}{d_X(x_3, x_4)}\right) 
\end{equation}
where $y_k=f(x_k)$, $k=1, \dots, 4$. 
\end{definition}

\begin{proposition}\label{prop:qhmaps}
If $f\colon X\onto Y$ is a quasihomogeneous map and $X$ admits a Lipschitz mixer, then $Y$ admits a Lipschitz mixer as well. When $X$ is compact, this also holds for local or semilocal mixers.
\end{proposition}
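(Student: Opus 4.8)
The plan is to transport a given Lipschitz mixer $\sigma_X$ on $X$ to a mixer on $Y$ by conjugating with $f$, that is, to set $\sigma_Y(y_1, y_2, y_3) = f(\sigma_X(f^{-1}(y_1), f^{-1}(y_2), f^{-1}(y_3)))$. The absorption property~\eqref{eq-absorption} is immediate from that of $\sigma_X$ together with the fact that $f$ and $f^{-1}$ are bijections, and continuity is clear. The whole content is in verifying that $\sigma_Y$ is Lipschitz, and here the point is that quasihomogeneity — unlike mere quasisymmetry — lets us compare the \emph{increments} $d_Y(\sigma_Y(y_1,y_2,y_3), \sigma_Y(y_1',y_2',y_3'))$ with $d_Y(y_i, y_i')$ in an additive, scale-free way.

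First I would record the elementary consequence of~\eqref{eq:qh-map} that a quasihomogeneous map between metric spaces with more than one point is "roughly multiplicative on all distances": fixing one pair of reference points $p\ne q$ in $X$ and writing $\lambda = d_Y(f(p),f(q))/d_X(p,q)$, inequality~\eqref{eq:qh-map} applied with $(x_3,x_4)=(p,q)$ and then with $(x_1,x_2)=(p,q)$ gives
\begin{equation}\label{eq:qh-doubling-dist}
\frac{1}{\eta(1/t)}\,\lambda\,d_X(a,b)\ \le\ d_Y(f(a),f(b))\ \le\ \eta(t)\,\lambda\,d_X(a,b),\qquad t=\frac{d_X(a,b)}{d_X(p,q)},
\end{equation}
for all $a\ne b$. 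The subtlety is that the distortion factor still depends on the ratio $t$, so~\eqref{eq:qh-doubling-dist} alone is not quite a bi-Lipschitz estimate. However, it is enough to combine it with the mixer displacement inequality~\eqref{eq-mixer-displacement}: applying~\eqref{eq-mixer-displacement} to $\sigma_X$ (and its permutations), one controls $d_X(\sigma_X(x_1,x_2,x_3), x_i)$ by $\Lip(\sigma_X)$ times a distance among the $x_i$, so that when we move one argument of $\sigma_X$ a small amount the output moves proportionally; feeding both the displacement of $\sigma_X$ and the input perturbation through the two-sided estimate~\eqref{eq:qh-doubling-dist} converts a Lipschitz bound in $X$ into one in $Y$, because both the numerator and denominator distances live on comparable scales. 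Concretely, I would estimate $d_Y(\sigma_Y(y_1,y_2,y_3), \sigma_Y(y_1',y_2,y_3))$ by first passing to $X$ via the lower bound in~\eqref{eq:qh-doubling-dist}, applying the Lipschitz bound for $\sigma_X$ in the first variable, and returning to $Y$ via the upper bound, checking that the ratio $t$ that appears stays bounded — and this boundedness is exactly where one uses that $\sigma_X$ does not move its output farther than the inputs are apart. Iterating over the three variables gives $\Lip \sigma_Y$ depending only on $\eta$, $\lambda$ (which cancels), and $\Lip\sigma_X$.

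The main obstacle I anticipate is precisely the scale-dependence of $\eta$ in~\eqref{eq:qh-doubling-dist}: one must argue that in every relevant application of the quasihomogeneity inequality the pair of "comparison" distances are within a bounded ratio of each other, so that $\eta$ is evaluated only on a bounded set and contributes a genuine constant. The mixer displacement bound~\eqref{eq-mixer-displacement} is the tool that guarantees this — it prevents $\sigma_X$ from producing outputs at a wildly different scale from its inputs — so the argument is really a marriage of~\eqref{eq-mixer-displacement} with quasihomogeneity. For the local and semilocal cases with $X$ compact, the same conjugation works: $f$ and $f^{-1}$ are uniformly continuous on the compact spaces $X$, $Y$, so $f$ maps $\Delta_\rho(Y^3)$ (resp. $\widetilde{\Delta}_\rho(Y^3)$) into $\Delta_r(X^3)$ (resp. $\widetilde{\Delta}_r(X^3)$) for $\rho$ small enough, and on a compact space $t$ in~\eqref{eq:qh-doubling-dist} ranges over a bounded interval automatically, so the Lipschitz estimate for $\sigma_Y$ goes through verbatim on the diagonal neighborhood.
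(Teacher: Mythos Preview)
Your conjugation $\sigma_Y = f\circ \sigma_X\circ (f^{-1})^3$ is exactly the right map, and the treatment of the local/semilocal cases via uniform continuity is fine. The gap is in the Lipschitz estimate. Your two–sided bound \eqref{eq:qh-doubling-dist} involves the ratio $t = d_X(a,b)/d_X(p,q)$ to a \emph{fixed} reference pair $p,q$, and the constants you get after the round trip ``down to $X$ and back to $Y$'' are of the form $\eta(t')\,\eta(1/t)$ with $t' \le (\Lip\sigma_X)\,t$. As the input perturbation shrinks, $t\to 0$ and $1/t\to\infty$; for a generic $\eta$ the product $\eta(t')\,\eta(1/t)$ blows up (take e.g.\ $\eta(s)=s$ for small $s$ and $\eta(s)=s^2$ for large $s$). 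Neither compactness (which bounds $t$ above but not below) nor the displacement inequality \eqref{eq-mixer-displacement} rescues this: \eqref{eq-mixer-displacement} controls $d_X(\sigma_X(x_1,x_2,x_3),x_i)$ in terms of distances among the $x_i$, which says nothing about the ratio of a small input perturbation to the fixed scale $d_X(p,q)$. In short, the reference pair is a detour that cannot close.

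The fix --- and this is what the paper does --- is to discard the reference pair and apply the quasihomogeneity inequality \eqref{eq:qh-map} \emph{directly} with the four points $(w,w',a,a')$, where $w=\sigma_X(a,b,c)$, $w'=\sigma_X(a',b',c')$, and $a,a'$ are chosen so that $d_X(a,a')=\max\{d_X(a,a'),d_X(b,b'),d_X(c,c')\}$. The Lipschitz property of $\sigma_X$ (not \eqref{eq-mixer-displacement}) gives $d_X(w,w')\le 3L\,d_X(a,a')$ with $L=\Lip\sigma_X$, so the ratio fed into $\eta$ is at most $3L$, a genuine constant. One application of \eqref{eq:qh-map} then yields $d_Y(f(w),f(w'))\le \eta(3L)\,d_Y(f(a),f(a'))$, and $\Lip\sigma_Y\le \eta(3L)$ follows. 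This is both simpler and correct: the point of quasihomogeneity over quasisymmetry is precisely that the two pairs in \eqref{eq:qh-map} need not share a point, so you can compare the output increment directly to the largest input increment without any auxiliary scale.
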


\begin{proof} Let $\sigma$ be a mixer on $X$ (possibly a local or semilocal one). For $(a, b, c)\in \dom \sigma$ we define $\tau(f(a), f(b), f(c)) = f(\sigma(a, b, c))$.  Clearly, $\tau$ is a global mixer on $Y$ when $\sigma$ is a global mixer on $X$. When $X$ is compact, both $f$ and $f^{-1}$ are uniformly continuous, which ensures that the image of $\Delta_r(X^3)$ under the map $(a, b, c)\mapsto (f(a), f(b), f(c))$ contains  $\Delta_\rho(Y^3)$ for some $\rho>0$. This ensures that when $\sigma$ is a local mixer, so is $\tau$. Same reasoning applies to the semilocal case.

To verify the Lipschitz property of $\tau$, consider two triples $(a, b, c)$ and $(a', b', c')$ in $\dom \sigma$.
Up to the reordering of variables, we have $d(a, a')\ge \max(d(b, b'), d(c, c'))$. Then the points  $w=\sigma(a,b,c)$ and $w'=\sigma(a',b',c')$ satisfy $d(w,w')\le 3Ld(a,a')$ where $L=\Lip\sigma$. The quasihomogeneity of $f$ implies $d(f(w),f(w'))\le \eta(3L) d(f(a),f(a'))$. Therefore, $\tau$ satisfies the Lipschitz condition with the constant $\eta(3L)$. 
\end{proof}

Proposition~\ref{prop:qhmaps} shows that every quasihomogeneous image of $\mathbb R^n$ admits a Lipschitz mixer, and every quasihomogeneous image of $S^n$ admits a semilocal Lipschitz mixer. Such metric spaces were studied in~\cite{Bishop01, Freeman11, Freeman13}.

\section{Existence of Lipschitz mixers}\label{sec-metric-intervals} 

Every metric interval $\Gamma$ has a total order induced by a homeomorphism with an interval in $\mathbb R$. Using such an order, we introduce three natural maps on $\Gamma^2$ or $\Gamma^3$:  
\begin{equation}\label{eq-min-med}
    (a, b)\mapsto \min(a, b), \quad (a, b)\mapsto \max(a, b), \quad 
    (a, b, c) \to \med(a, b, c) 
\end{equation}
where $\med$ is the median of three elements, that is, 
\begin{equation}\label{eq-med-from-max-min}
\med(a,b,c)=\max(\min(a,b), \min(b,c), \min(a, c)). 
\end{equation}

Each of the maps in~\eqref{eq-min-med} provides a simple characterization of the metric intervals of bounded turning. 

\begin{proposition}\label{BT-char}
The following are equivalent properties of a metric interval $(\Gamma, d)$: 
\begin{enumerate}[(i)]
    \item\label{BT1} $\Gamma$ has bounded turning;
    \item\label{BT2} the maps $\min$ and $\max$ are Lipschitz on $\Gamma^2$;
    \item\label{BT3} the median map $\med$ is Lipschitz on $\Gamma^3$.
        
\end{enumerate}
The implicit constants in statements~\eqref{BT1}, \eqref{BT2}, \eqref{BT3} depend only on each other. 
\end{proposition}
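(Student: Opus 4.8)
The plan is to prove the cyclic chain \textrm{(i)}$\Rightarrow$\textrm{(ii)}$\Rightarrow$\textrm{(iii)}$\Rightarrow$\textrm{(i)}, keeping track of constants so that each one is bounded in terms of the previous, which gives the last sentence of the statement. Throughout I would fix the total order on $\Gamma$ coming from a homeomorphism $h$ with an interval of $\mathbb R$ and write $[a,b]$ for the subarc joining $a$ to $b$. A preliminary reduction: since a connected subset of $\Gamma$ is the $h$-preimage of a subinterval of $\mathbb R$, any connected set containing $a$ and $b$ contains all of $[a,b]$; hence, as already noted in Section~\ref{sec-definitions}, $\Gamma$ has bounded turning with constant $C$ if and only if $\diam[a,b]\le C\,d(a,b)$ for all $a,b\in\Gamma$. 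I will use this reformulation freely, together with the stated fact that a map on a Cartesian power is Lipschitz with constant $L$ iff it is $L$-Lipschitz in each variable separately.

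For \textrm{(i)}$\Rightarrow$\textrm{(ii)}: by the symmetry of $\min$ it suffices to bound $d(\min(a,b),\min(a',b))$ by $C\,d(a,a')$, and I would split into three cases according to the position of $b$ relative to $a$ and $a'$ in the order. When $b$ is not between $a$ and $a'$ the bound is trivial (the distance is either $d(a,a')$ or $0$); when $b$ lies between them, $\min(a,b)$ and $\min(a',b)$ are the two endpoints of the subarc $[a,a']$, so their distance is at most $\diam[a,a']\le C\,d(a,a')$. The map $\max$ is handled identically, or by passing to the reversed order. For \textrm{(ii)}$\Rightarrow$\textrm{(iii)}: formula~\eqref{eq-med-from-max-min} writes $\med$ as the composition of $(a,b,c)\mapsto(\min(a,b),\min(b,c),\min(a,c))$ with the ternary maximum $\Gamma^3\to\Gamma$, the latter being a composition of two binary maxima; so $\Lip\med$ is bounded by a universal function of $\Lip\min$ and $\Lip\max$.

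The crux is \textrm{(iii)}$\Rightarrow$\textrm{(i)}, and the main obstacle is that the naive way of testing $\med$ on a subarc is circular: for $a<x<y<b$ one has $\med(a,x,b)=x$ and $\med(a,y,b)=y$, and the Lipschitz inequality between these only yields $d(x,y)\le L\,d(x,y)$. The remedy I would use is to perturb an \emph{endpoint} rather than the middle point: for $a<b$ and any $x\in[a,b]$ one checks from~\eqref{eq-med-from-max-min} that $\med(a,x,b)=x$ while $\med(a,x,a)=a$, and these two triples differ only in the third coordinate, by $d(a,b)$; hence $d(x,a)\le \Lip(\med)\,d(a,b)$. It follows that $\diam[a,b]\le 2\,\Lip(\med)\,d(a,b)$, i.e.\ $\Gamma$ has bounded turning. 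Assembling the three steps shows that the constants in the three conditions are each controlled by the others, which proves the proposition. (As a side remark, the same endpoint-perturbation trick applied to $\min$ alone — comparing $\min(x,b)=x$ with $\min(x,a)=a$ — already gives \textrm{(ii)}$\Rightarrow$\textrm{(i)} directly, should a non-cyclic presentation be preferred.)
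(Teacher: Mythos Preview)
Your argument follows the same cyclic route as the paper's proof, with essentially identical ideas in each implication. In particular, your endpoint-perturbation for \textrm{(iii)}$\Rightarrow$\textrm{(i)} is exactly what the paper does (the paper compares $\med(a,b,c)=c$ with $\med(b,b,c)=b$, which is your trick with relabeled variables).

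One small misstatement to fix in \textrm{(i)}$\Rightarrow$\textrm{(ii)}: when $b$ lies strictly between $a$ and $a'$ (say $a<b<a'$), the values $\min(a,b)=a$ and $\min(a',b)=b$ are \emph{not} the two endpoints of $[a,a']$; one of them is $b$. The correct observation is simply that both values belong to the subarc $[a,a']$, so their distance is bounded by $\diam[a,a']\le C\,d(a,a')$. This is exactly what the paper records (it notes that the distance equals either $d(a,b)$ or $d(a',b)$, each bounded by $C\,d(a,a')$). With that one-word correction your proof is complete and matches the paper.
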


\begin{proof} Since the maps in~\eqref{eq-min-med} are symmetric, it suffices to consider their Lipschitz property with respect to the first argument.  

\eqref{BT1}$\implies$\eqref{BT2} Suppose $\Gamma$ has bounded turning with a constant $C$. Take three points $a, b, a' \in \Gamma$ and let 
$\rho = d(\min(a,b), \min(a', b))$. 
If $b$ does not separate $a$ from $a'$, then either $\rho=0$ or $\rho=d(a, a')$. If $b$ does separate $a$ from $a'$, then  
either $\rho=d(a,b)$ or $\rho=d(a', b)$, and both of those are bounded by $C d(a, a')$. Thus, $\rho \le C d(a, a')$ 
holds in every case. 

\eqref{BT2}$\implies$\eqref{BT3} 
This is immediate from~\eqref{eq-med-from-max-min}. 

\eqref{BT3}$\implies$\eqref{BT1} Let $L$ be the Lipschitz constant of the median map with respect to the first argument. Given two points $a, b$ on $\Gamma$, let $E$ be the subarc of $\Gamma$ between $a$ and $b$. For any point $c\in E$ we have $\med(a,b,c)=c$ and  $\med(b,b,c)=b$. Therefore 
$d(b,c)\le L d(a,b)$. Since $c\in E$ was arbitrary, it follows than $\diam E\le 2L d(a,b)$ and thus $\Gamma$ has  bounded turning. 
\end{proof}

\begin{proposition}\label{path-conn-BT} 
If a path-connected metric space $(X, d)$ admits a Lipschitz mixer, then it has bounded turning. 
\end{proposition}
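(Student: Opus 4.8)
The plan is to use path-connectedness only to produce, for two given points $a,b\in X$, some path $\gamma\colon[0,1]\to X$ with $\gamma(0)=a$ and $\gamma(1)=b$, and then to feed this path through the mixer in its \emph{third} slot while keeping $a$ and $b$ frozen in the first two slots. Concretely, I would set
\[
E=\{\sigma(a,b,\gamma(t))\colon t\in[0,1]\},
\]
and claim that $E$ is a compact connected subset of $X$ containing both $a$ and $b$ with $\diam E\le 2\Lip(\sigma)\,d(a,b)$. This is precisely Definition~\ref{def-metric-BT} with constant $C=2\Lip(\sigma)$, so the proposition follows. (The case $a=b$ is trivial: take $E=\{a\}$.)

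First I would check that $a,b\in E$. By the absorption property~\eqref{eq-absorption}, $\sigma(a,b,\gamma(0))=\sigma(a,b,a)=a$ and $\sigma(a,b,\gamma(1))=\sigma(a,b,b)=b$. Next, $E$ is the image of the compact connected interval $[0,1]$ under the continuous map $t\mapsto\sigma(a,b,\gamma(t))$, hence $E$ is compact and connected.

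The crucial point is the diameter bound, and it comes straight from the displacement inequality~\eqref{eq-mixer-displacement}: since $\sigma(a,a,\gamma(t))=a$, we get $d(\sigma(a,b,\gamma(t)),a)\le \Lip(\sigma)\,d(a,b)$ for every $t$. Thus every point of $E$ lies within distance $\Lip(\sigma)\,d(a,b)$ of $a$, and the triangle inequality gives $\diam E\le 2\Lip(\sigma)\,d(a,b)$.

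I do not expect a serious obstacle; the only idea is the choice of which two arguments of $\sigma$ to hold fixed. Freezing the pair $\{a,b\}$ and sweeping the third argument along \emph{any} path confines the output to within $\Lip(\sigma)\,d(a,b)$ of $a$, no matter how far $\gamma$ wanders, which is exactly what makes the argument insensitive to the possibly very poor metric geometry of $\gamma$. The subtlety worth stating explicitly in the write-up is only that $\sigma(a,b,a)=a$ and $\sigma(a,b,b)=b$ are different instances of~\eqref{eq-absorption} (the arguments coincide in positions $1,3$ and $2,3$ respectively), so the endpoints of $E$ are indeed $a$ and $b$.
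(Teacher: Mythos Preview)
Your proof is correct and essentially identical to the paper's own argument: choose a path from $a$ to $b$, push it through the third slot of $\sigma$ while holding $a,b$ fixed, use absorption to identify the endpoints, and use~\eqref{eq-mixer-displacement} plus the triangle inequality for the diameter bound $2\Lip(\sigma)\,d(a,b)$.
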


\begin{proof} Given $a, b\in X$, let $\Gamma\colon [0, 1]\to X$ be a curve such that $\Gamma(0)=a$ and $\Gamma(0)=b$. Define $\gamma\colon [0, 1]\to X$ by $\gamma(t)=\sigma(a, b, \Gamma(t))$. Then $\gamma(0)=a$, $\gamma(1)=b$, and for every $t\in [0, 1]$ we have $d(\gamma(t), a)
\le\Lip(\sigma)\,d(a, b)$ by virtue of~\eqref{eq-mixer-displacement}. Thus $\diam \gamma\le 2 \Lip(\sigma)\, d(a, b)$.
\end{proof}

Theorem~\ref{thm-BT-char-mix} is a part of the following corollary.

\begin{corollary}\label{BT-char-mix}
The following are equivalent properties of a metric interval $(\Gamma, d)$: 
\begin{enumerate}[(i)]
    \item\label{BT1a} $\Gamma$ has bounded turning;
    \item\label{BT2a} $\Gamma$ admits a Lipschitz mixer;
    \item\label{BT3a} $\Gamma$ admits a symmetric Lipschitz mixer.
\end{enumerate}
\end{corollary}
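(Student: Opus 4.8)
The plan is to prove the cycle \eqref{BT3a}$\implies$\eqref{BT2a}$\implies$\eqref{BT1a}$\implies$\eqref{BT3a}. The implication \eqref{BT3a}$\implies$\eqref{BT2a} is immediate, since a symmetric Lipschitz mixer is in particular a Lipschitz mixer. For \eqref{BT2a}$\implies$\eqref{BT1a}, I would note that a metric interval, being homeomorphic to an interval of $\mathbb{R}$, is path-connected, so Proposition~\ref{path-conn-BT} applies directly and yields bounded turning. Thus all the content lies in \eqref{BT1a}$\implies$\eqref{BT3a}.

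For that implication the idea is to take the median map itself as the mixer. Assuming $\Gamma$ has bounded turning, Proposition~\ref{BT-char} tells us that $\med\colon\Gamma^{3}\to\Gamma$ from~\eqref{eq-med-from-max-min} is Lipschitz, with constant controlled by the bounded turning constant. It then remains to verify the two elementary algebraic facts that make $\med$ a symmetric mixer. First, the right-hand side of~\eqref{eq-med-from-max-min} is a maximum taken over the three unordered pairs drawn from $\{a,b,c\}$, so it is symmetric under all permutations of $a,b,c$; in particular it does not depend on which of the two opposite total orders of $\Gamma$ is chosen, hence $\med$ is well-defined on $\Gamma$. Second, the absorption property~\eqref{eq-absorption} follows from $\med(a,a,b)=\max(a,\min(a,b),\min(a,b))=a$ (using $\min(a,b)\le a$), together with the permutation symmetry just noted. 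Hence $\med$ is a symmetric Lipschitz mixer, completing the cycle.

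I do not expect a genuine obstacle: once Propositions~\ref{BT-char} and~\ref{path-conn-BT} are available, the corollary is essentially a bookkeeping exercise plus the two one-line checks about $\med$ above. The only points deserving care are the reduction of \eqref{BT2a}$\implies$\eqref{BT1a} through path-connectedness of metric intervals, and the remark that the order on $\Gamma$ entering the definition of $\med$ is canonical up to reversal, under which $\med$ is invariant, so that no arbitrary choice has to be made.
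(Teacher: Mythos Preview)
Your proposal is correct and follows exactly the paper's own proof: the same cycle \eqref{BT3a}$\implies$\eqref{BT2a}$\implies$\eqref{BT1a}$\implies$\eqref{BT3a}, with \eqref{BT2a}$\implies$\eqref{BT1a} via Proposition~\ref{path-conn-BT} and \eqref{BT1a}$\implies$\eqref{BT3a} via the median map and Proposition~\ref{BT-char}. Your additional remarks verifying that $\med$ is symmetric, satisfies the absorption property, and is independent of the choice of orientation on $\Gamma$ are all sound and simply spell out what the paper leaves implicit.
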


\begin{proof} Proposition~\ref{path-conn-BT} shows that \eqref{BT2a}$\implies$\eqref{BT1a}. The implication \eqref{BT3a}$\implies$\eqref{BT2a} is trivial. Finally, \eqref{BT1a}$\implies$\eqref{BT3a} is a part of Proposition~\ref{BT-char} because the median map~\eqref{eq-min-med} is a symmetric mixer. 
\end{proof}

Since the bounded turning property is preserved by quasisymmetric maps, the existence of a Lipschitz mixer turns out to be a quasisymmetric invariant for metric intervals. However, Example~\ref{example-no-qs-mixer} will show that it is not quasisymmetrically invariant in general. Its construction will involve chain-connected spaces, which are defined next. 

\begin{definition}\label{def-chain-connected} A metric space $(X, d)$ is \emph{chain-connected} if for any two points $a, b\in X$ and any $\epsilon>0$ there exists a finite sequence $x_0, \dots, x_n$ with $x_0=a$, $x_n=b$, and $d(x_k, x_{k-1})<\epsilon$ for $k=1, \dots, n$. Such a sequence is called an \emph{$\epsilon$-chain}.
\end{definition}

A metric space is \emph{proper} if its closed bounded subsets are compact. 

\begin{proposition}\label{prop-chain-connected} Every  proper chain-connected metric space with a Lipschitz mixer is connected. 
\end{proposition}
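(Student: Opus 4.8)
The plan is to argue by contradiction, using the mixer to turn an arbitrarily fine $\epsilon$-chain from $a$ to $b$ into one that is trapped inside a \emph{fixed} compact set, on which properness then produces a positive gap between the two halves of a disconnection. So, suppose $X$ is disconnected; then $X=A\cup B$ with $A,B$ nonempty, disjoint, and both closed (each being the complement of the other). Fix $a\in A$ and $b\in B$. The point of the proof is that chain-connectedness by itself gives fine chains but no control on where they go, whereas the displacement estimate \eqref{eq-mixer-displacement} lets the mixer ``pull'' a chain back into a single bounded region.

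Concretely, I would first normalize $L:=\Lip\sigma$ so that $L\ge 1$ and set $K$ to be the closed ball $\{x\in X:d(x,a)\le L\,d(a,b)\}$, which is compact by properness and contains $b$. Then $A\cap K$ and $B\cap K$ are disjoint, nonempty, compact sets, so $\delta:=\dist(A\cap K,B\cap K)>0$. Next, using chain-connectedness I would pick an $\epsilon$-chain $a=x_0,x_1,\dots,x_n=b$ with $\epsilon<\delta/L$ and define $y_k:=\sigma(a,x_k,b)$ for $0\le k\le n$. The absorption property \eqref{eq-absorption} gives $y_0=\sigma(a,a,b)=a$ and $y_n=\sigma(a,b,b)=b$. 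Since $\sigma(a,x_k,a)=a$, the permuted form of the displacement bound \eqref{eq-mixer-displacement} (applied in the third variable) gives $d(y_k,a)=d(\sigma(a,x_k,b),\sigma(a,x_k,a))\le L\,d(a,b)$, so $y_k\in K$ for every $k$; and the Lipschitz property of $\sigma$ in its second variable gives $d(y_{k-1},y_k)\le L\,d(x_{k-1},x_k)<L\epsilon<\delta$. Since $y_0\in A$ and $y_n\in B$, some consecutive pair $y_{k-1},y_k$ lies on opposite sides of the partition; both points are in $K$, so $d(y_{k-1},y_k)\ge\dist(A\cap K,B\cap K)=\delta$, contradicting $d(y_{k-1},y_k)<\delta$. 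Hence $X$ is connected.

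I do not expect a real obstacle here. The one subtlety worth flagging is that $A$ and $B$ need not be separated by a positive distance in $X$ as a whole (think of a convergent sequence together with its limit), which is precisely why the argument must be localized to the compact set $K$, and why the contraction-type estimate $d(\sigma(a,x_k,b),a)\le L\,d(a,b)$ — the feature that a bare $\epsilon$-chain lacks but a Lipschitz mixer supplies — is the essential ingredient.
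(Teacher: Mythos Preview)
Your proof is correct and follows essentially the same approach as the paper: contradiction via a clopen partition, pushing an $\epsilon$-chain through the mixer to obtain an $(L\epsilon)$-chain confined to the compact ball of radius $L\,d(a,b)$, and then invoking the positive separation of the two clopen pieces on that ball. The only cosmetic differences are the order of arguments in $\sigma$ and that you fix $\delta$ first and then choose $\epsilon$, whereas the paper lets $\epsilon\to 0$; your normalization $L\ge 1$ is automatic for any nontrivial $X$ since $d(a,c)=d(\sigma(a,a,c),\sigma(c,a,c))\le L\,d(a,c)$.
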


\begin{proof} Let $X$ be such a space with a mixer $\sigma$, and let $L=\Lip \sigma$. Suppose to the contrary that there exists a subset $U\subset X$ which is both closed and open, with some points $a\in U$ and $b\in X\setminus U$. Let $B$ be the closed ball with center $a$ and radius  $L\,d(a, b)$. Given $\epsilon>0$, pick an $\epsilon$-chain $\{x_k\colon k=0, \dots, n\}$ from $a$ to $b$ and let $y_k=\sigma(a, b, x_k)$. For all $k$ we have $y_k\in B$ by virtue of~\eqref{eq-mixer-displacement}. Since $\{y_k\}$ is an $(L\epsilon)$-chain from $a$ to $b$ and is contained in $B$, it follows that $\dist(B\cap U, B\setminus U)\le L\epsilon$. But $\epsilon>0$ was arbitrary, contradicting the fact that $B\cap U$ and $B\setminus U$ are disjoint compact sets. 
\end{proof}

\begin{example}\label{example-no-qs-mixer} Let $E=\mathbb R\times \{0,1\}$ be the union of two parallel lines in $\mathbb R^2$. The set $E$ admits a Lipschitz mixer, namely the coordinate-wise median map~\eqref{eq-coordinate-median}. Fix $0<\alpha<1$ and let $f$ be the quasisymmetric map~\eqref{eq-power-map}. Then  $f(E)$ does not admit  a Lipschitz mixer. 
\end{example}

\begin{proof}  The set $f(E)$ is the union of the real axis $\Gamma_0$ with the curve $\Gamma_1$ that is asymptotic to $\Gamma_0$ because $\im f(x+i)\to 0$ as $|x|\to\infty$, $x\in \mathbb R$. Therefore, $f(E)$ is chain-connected. Since $f(E)$ is not connected, Proposition~\ref{prop-chain-connected} implies that $f(E)$ does not admit a Lipschitz mixer. 
\end{proof}

\begin{example}\label{example-grushin}
A quasisymmetric image of $\mathbb R^2$ (a \emph{quasiplane}) need not admit a Lipschitz mixer. A counterexample is the Grushin plane $\mathbb G$, a sub-Riemannian manifold which is a quasisymmetric image of $\mathbb R^2$~\cite{Meyerson}. By~\cite[Remark 8.4]{DejarnetteHLT} $\mathbb G$ has nontrivial Lipschitz fundamental group, and therefore admits no Lipschitz mixer. 
\end{example}

\section{Existence of Lipschitz means}\label{sec-existence-means}

In the literature on continuous means, the number of arguments is often taken to be $n\ge 2$~\cite{Aumann, Bacon1969, Eckmann}. By definition, an \emph{$n$-mean} is a continuous map $\mu\colon X^n\to X$ which is symmetric with respect to any permutation of its arguments and satisfies $\mu(x, \dots, x)=x$ for all $x\in X$. The existence of Lipschitz $n$-means is related to the Lipschitz clustering property, which is defined below. 

Given a metric space $X$ and a positive integer $n$, let $X(n)$ be the set of all nonempty subsets of $X$ with at most $n$ elements. The Hausdorff metric on $X(n)$ is introduced as
\begin{equation}\label{def-dh}
\dH(A, B) = \max\left(\sup_{a\in A} \dist(a, B), \sup_{b\in B} \dist(b, A)\right).
\end{equation}

The definition of the Hausdorff distance can be phrased in terms of the  \emph{displacement} of a map in a metric space, defined as 
$\disp(f)=\sup\{d(f(x), x)\colon x\in \dom f\}$. Indeed, $\dH(A, B)$ is the smallest number $\rho$ for which there exist maps $f\colon A\to B$ and $g\colon B\to A$ with $\disp(f)\le \rho$ and $\disp(g)\le \rho$.

\begin{definition} \cite{Kovalev2022} \label{def-lcp} A metric space 
$X$ has the \emph{Lipschitz clustering property} if there exist Lipschitz retractions $X(n)\to X(k)$ for all integers $n > k \ge 1$. 
\end{definition}

We usually write $X$ instead of $X(1)$ since these two spaces are naturally identified.  

\begin{lemma}\label{set-symm-means} A metric space $X$ admits a Lipschitz retraction $X(n)\to X$ if and only if it admits a Lipschitz $n$-mean $\mu$ with the additional property that 
\begin{equation}\label{eq-set-symm}
\mu(a_1, \dots, a_n) =  \mu(b_1, \dots, b_n)   \quad \text{whenever }\ \{a_1,\dots,a_n\}=\{b_1,\dots,b_n\}.
\end{equation}
\end{lemma}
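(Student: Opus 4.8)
The plan is to pass back and forth between a map on the power $X^n$ and a map on the space of small subsets $X(n)$, using the fact that the natural surjection $q\colon X^n\to X(n)$, $q(a_1,\dots,a_n)=\{a_1,\dots,a_n\}$, is Lipschitz and that $X(n)$ carries the quotient-type metric described in the excerpt. For the forward direction, suppose $\rho\colon X(n)\to X$ is a Lipschitz retraction (recall $X(1)=X$ is identified with the singletons in $X(n)$, and ``retraction'' means $\rho(\{a\})=a$). Define $\mu\colon X^n\to X$ by $\mu(a_1,\dots,a_n)=\rho(\{a_1,\dots,a_n\})=\rho(q(a_1,\dots,a_n))$. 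Then $\mu$ is Lipschitz as a composition of Lipschitz maps; it is symmetric under all permutations and satisfies property~\eqref{eq-set-symm} because $q$ already identifies tuples with the same underlying set; and $\mu(x,\dots,x)=\rho(\{x\})=x$, so $\mu$ is an $n$-mean. The only point needing a line of justification is that $q$ is $1$-Lipschitz from $(X^n, \sum d(a_k,b_k))$ to $(X(n),\dH)$: given tuples $a,b$, the maps $a_k\mapsto b_k$ and $b_k\mapsto a_k$ restrict to maps between the finite sets $\{a_k\}$ and $\{b_k\}$ with displacement at most $\max_k d(a_k,b_k)\le\sum_k d(a_k,b_k)$, and the displacement characterization of $\dH$ in the excerpt gives $\dH(q(a),q(b))\le\sum_k d(a_k,b_k)$.

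For the converse, suppose $\mu\colon X^n\to X$ is a Lipschitz $n$-mean satisfying~\eqref{eq-set-symm}. The condition~\eqref{eq-set-symm} says exactly that $\mu$ is constant on the fibers of $q$, so there is a well-defined map $\rho\colon X(n)\to X$ with $\rho\circ q=\mu$; idempotency $\mu(x,\dots,x)=x$ makes $\rho$ a retraction onto the singletons. The work is to show $\rho$ is Lipschitz. Given $A=\{a_1,\dots,a_j\}$ and $B=\{b_1,\dots,b_\ell\}$ in $X(n)$ with $\dH(A,B)=\rho_0$, I would choose maps $f\colon A\to B$ and $g\colon B\to A$ with displacement $\le\rho_0$, then build two $n$-tuples $\hat a,\hat b$ that \emph{represent} $A$ and $B$ (padding by repetition to reach length exactly $n$) and that are, coordinatewise, close: the idea is to list the elements of $A$ and then append repeats, list the elements of $B$ likewise, and arrange the padding so that $\hat b_k$ is either $f(\hat a_k)$ or a previously used value, keeping $d(\hat a_k,\hat b_k)\le\rho_0$ in each coordinate; using $g$ symmetrically if $|B|<|A|$. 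Then $\mu(\hat a)=\rho(A)$, $\mu(\hat b)=\rho(B)$, and the Lipschitz bound for $\mu$ gives $d(\rho(A),\rho(B))=d(\mu(\hat a),\mu(\hat b))\le(\Lip\mu)\sum_k d(\hat a_k,\hat b_k)\le n(\Lip\mu)\,\dH(A,B)$.

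The main obstacle is the padding/representative construction in the converse: one must produce, for \emph{every} pair $A,B$ of subsets of possibly different cardinalities, a pair of length-$n$ tuples whose underlying sets are $A$ and $B$ and whose coordinatewise distance sum is controlled by $\dH(A,B)$. The clean way to organize this is to first reduce to the equidistant case by the displacement description of $\dH$: fix near-optimal $f\colon A\to B$ and $g\colon B\to A$; write $A=\{a_1,\dots,a_j\}$; take $\hat a=(a_1,\dots,a_j,a_j,\dots,a_j)$ (or any padding) and $\hat b=(f(a_1),\dots,f(a_j),z,\dots,z)$ where the tail of $\hat b$ is filled using $g$ so that $B=\{\hat b_1,\dots,\hat b_n\}$ is achieved while every coordinate move $a_k\to\hat b_k$ travels at most $\dH(A,B)$ — this is possible because $f(A)\cup$ (image of the padding under a suitable choice) can be made to exhaust $B$ while each fiber of $g$ is nonempty, so every point of $B$ is within $\dH(A,B)$ of some point of $A$ that we are free to place in the corresponding slot. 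Once this bookkeeping is set up, the estimate is immediate from the coordinatewise Lipschitz property recorded in Section~\ref{sec-definitions}, and the constants in the two directions differ only by the factor $n$, matching the statement that the constants depend only on each other.
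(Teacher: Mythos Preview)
Your forward direction is fine and matches the paper. The gap is in the converse, specifically in the ``padding/representative construction.'' You assert that for any $A,B\in X(n)$ one can build length-$n$ tuples $\hat a,\hat b$ with underlying sets exactly $A$ and $B$ and with $d(\hat a_k,\hat b_k)\le \dH(A,B)$ in every coordinate. This is false in general, and your sketch does not produce it: when $|A|=|B|=n$ there is no padding at all, yet the tuple $\hat b=(f(a_1),\dots,f(a_n))$ need not have underlying set $B$, since $f$ need not be surjective. A concrete obstruction in $\mathbb R^2$: take $n=3$, $A=\{(0,0),(1,0),(-1,0)\}$, $B=\{(0,0),(0,1),(0,-1)\}$. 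Then $\dH(A,B)=1$, but the only pairs $(a,b)\in A\times B$ with $|a-b|\le 1$ are those containing the origin. Any length-$3$ tuples $\hat a,\hat b$ whose underlying sets are $A$ and $B$ are permutations of the three points, so you would need a perfect matching in this bipartite graph; none exists because $(1,0)$ and $(-1,0)$ are both adjacent only to $(0,0)$.

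The paper avoids this matching problem by inserting an intermediate set $B'=f(A)\subseteq B$ and comparing $r(A)$ to $r(B')$ and $r(B)$ to $r(B')$ separately. For the first comparison one uses the tuple $(f(a_1),\dots,f(a_n))$, which legitimately represents $B'$; for the second one builds a map $h\colon B\to B'$ with $\disp(h)\le 2\dH(A,B)$ by sending $b\mapsto b$ on $B'$ and $b\mapsto f(g(b))$ off $B'$. This yields $d(r(A),r(B))\le 3nL\,\dH(A,B)$ via the triangle inequality. Your outline can be repaired along exactly these lines, but as written the key step (``this is possible because\dots we are free to place in the corresponding slot'') is the point that fails.
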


\begin{proof} A Lipschitz retraction $r\colon X(n)\to X$ induces a map $X^n\to X$ by $\mu(a_1, \dots, a_n)=r(\{a_1, \dots, a_n\})$. It is clear than $\mu$ is a Lipschitz mean and ~\eqref{eq-set-symm} holds. 

Conversely, suppose that $\mu$ is an $L$-Lipschitz mean and~\eqref{eq-set-symm} holds. Any set $A\in X(n)$ can be written as $\{a_1,\dots, a_n\}$ by possibly repeating some of its elements. By virtue of~\eqref{eq-set-symm} the map $r(A):=\mu(a_1, \dots, a_n)$ is well defined. By the definition of a mean, $r(\{a\})=a$. Given any two sets $A, B\in X(n)$, let $\rho=\dH(A, B)$ and pick two maps $f\colon A\to B$ and $g\colon B\to A$ such that $\disp(f)\le \rho$ and $\disp(g)\le \rho$. Let $B'=f(A)$ and define $h\colon B\to B$ so that $h(b)=b$ when $b\in B'$ and $h(b)=f(g(b))$ otherwise. Note that $h(B)=B'$ and $\disp(h)\le 2\rho$. Writing $A=\{a_1, \dots, a_n\}$, we have 
\[d(r(A), r(B'))
=d(\mu(a_1, \dots, a_n), \mu(f(a_1), \dots, f(a_n)) \le nL\rho\]
by the Lipschitz property of $\mu$. Similarly, $d(r(B), r(B'))\le 2nL\rho$ using $B'=h(B)$. 
By the triangle inequality,
$d(r(A), r(B))\le  3nL\rho$ which shows that $r$ is Lipschitz in the Hausdorff metric. 
\end{proof}

In the special case $n=2$ the property~\eqref{eq-set-symm} reduces to $\mu$ being a symmetric function, which is a part of the definition of a mean. Thus, the existence of a Lipschitz retraction $X(2)\to X$ is equivalent to the existence of a Lipschitz mean on $X$. 
In view of this connection, the following result is a consequence of~\cite[Corollary 1]{Kovalev2022} which states that uniformly disconnected spaces have the Lipschitz clustering property. 

\begin{corollary}\label{unif-disconnect} Suppose that $(X, d)$ is a uniformly disconnected metric space; that is, there exists a constant $c>0$ such that any finite sequence $x_0, \dots, x_m$ in $X$ satisfies 
\[
\max_{1\le k\le m}d(x_k, x_{k-1}) \ge c\, d(x_0, x_m).
\]
Then $X$ admits a Lipschitz $n$-mean for every $n\ge 2$.
\end{corollary}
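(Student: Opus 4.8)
The plan is to derive Corollary~\ref{unif-disconnect} from the already-cited result of~\cite[Corollary 1]{Kovalev2022}, namely that a uniformly disconnected metric space has the Lipschitz clustering property, together with Lemma~\ref{set-symm-means} above. First I would observe that the Lipschitz clustering property gives, in particular, a Lipschitz retraction $r_n\colon X(n)\to X(1)=X$ for every $n\ge 2$ (composing the retractions $X(n)\to X(n-1)\to\dots\to X(1)$ if one prefers to build it step by step, or invoking the $n>k=1$ case directly). Then Lemma~\ref{set-symm-means} converts such a retraction into a Lipschitz $n$-mean $\mu\colon X^n\to X$: given $a_1,\dots,a_n\in X$, set $\mu(a_1,\dots,a_n)=r_n(\{a_1,\dots,a_n\})$. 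The symmetry of $\mu$ under all permutations and the idempotency $\mu(x,\dots,x)=x$ are immediate from the corresponding properties of a retraction onto $X(1)$, and the Lipschitz estimate is exactly what Lemma~\ref{set-symm-means} provides.

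The one point that needs a sentence of care is that uniform disconnectedness as stated in the corollary is the hypothesis under which~\cite[Corollary 1]{Kovalev2022} applies; I would simply note that the displayed inequality in the statement is the standard definition of a uniformly disconnected space, so the cited theorem applies verbatim. Everything else is bookkeeping: there is no genuine obstacle here, since the substantive work — proving that uniformly disconnected spaces have the Lipschitz clustering property — is done in~\cite{Kovalev2022} and is being quoted as a black box.

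Accordingly the proof is essentially two lines: ``By~\cite[Corollary 1]{Kovalev2022}, $X$ has the Lipschitz clustering property, hence admits a Lipschitz retraction $X(n)\to X$ for every $n\ge 2$. By Lemma~\ref{set-symm-means}, this retraction is induced by a Lipschitz $n$-mean on $X$.'' If I wanted to make the note self-contained I might also remark why the case $n=2$ recovers the existence of a Lipschitz mean in the ordinary sense, but that has already been spelled out in the paragraph preceding the corollary, so I would not repeat it. The main (very mild) thing to get right is citing the correct statement from~\cite{Kovalev2022} and matching its hypothesis to the definition of uniform disconnectedness given inline in the corollary.
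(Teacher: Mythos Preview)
Your proposal is correct and matches the paper's own argument exactly: the corollary is stated there as an immediate consequence of \cite[Corollary~1]{Kovalev2022} (uniformly disconnected spaces have the Lipschitz clustering property) together with Lemma~\ref{set-symm-means}, with no further proof given. Your two-line summary is precisely what the paper intends.
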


Proposition~\ref{BT-char} shows that every metric interval $\Gamma$ of bounded turning admits a Lipschitz mean, e.g., $(a, b)\mapsto \min(a, b)$.  However, the existence of a Lipschitz mean does not imply that $\Gamma$ has bounded turning. 
\begin{proposition}\label{two-quasiarcs-abstract}
Suppose that $d$ is a metric on $\mathbb R$ with the following properties: 
\begin{enumerate}[(a)]
\item\label{metric-ax1} the maps $x\to -x$ and $x\to |x|$ are Lipschitz from $(\mathbb R, d)$ to $(\mathbb R, d)$;
\item\label{metric-ax2} the half-line $[0, \infty)$ has bounded turning with respect to $d$;
\item\label{metric-ax3} there exists $C>0$ such that $d(-z,z)\le Cd(x,y)$ whenever $x\le -z \le z\le y$.
\end{enumerate} 
Then for every $n\ge 2$ the metric space $X=(\mathbb R, d)$ admits a Lipschitz $n$-mean, specifically
\begin{equation}\label{eq-n-lip-mean}
\mu(x_1, \dots, x_n) = \left(\min_{1\le k\le n} |x_k|\right)
\, \sign \max_{1\le k\le n} x_k 
\end{equation}
where $\sign x \in \{-1, 0, 1\}$ is the sign of real number $x$. 
\end{proposition}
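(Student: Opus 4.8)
The plan is as follows. The map $\mu$ of~\eqref{eq-n-lip-mean} is symmetric under permutations of its arguments, since $\min_k|x_k|$ and $\max_k x_k$ are, and $\mu(x,\dots,x)=|x|\,\sign x=x$; so $\mu$ is an $n$-mean and only the Lipschitz bound requires work. Because $X^n$ carries the sum metric, a map $X^n\to X$ is $L$-Lipschitz as soon as it is $L$-Lipschitz in each variable separately, so by symmetry it suffices to bound the Lipschitz constant of $t\mapsto\mu(t,x_2,\dots,x_n)$ uniformly in $x_2,\dots,x_n$ (and in $n$). Fix these variables and set $\beta=\max_{2\le j\le n}x_j$ and $\gamma=\min_{2\le j\le n}|x_j|\ge 0$, so that $\min_k|x_k|=\min(|t|,\gamma)$ at the point $(t,x_2,\dots,x_n)$. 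Fix also a common Lipschitz constant $L_a\ge 1$ for $x\mapsto -x$ and $x\mapsto|x|$ (hypothesis (a)) and a bounded-turning constant $C_+\ge 1$ for $[0,\infty)$ (hypothesis (b)). I will identify the one-variable map explicitly in two cases.

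If $\beta\ge 0$, the index realizing $\beta$ has absolute value $\beta$, hence $\gamma\le\beta$; moreover $\sign\max(t,\beta)\in\{0,1\}$, and if $\beta=0$ then $\gamma=0$. A direct check gives $\mu(t,x_2,\dots,x_n)=\min(|t|,\gamma)$ for all $t$ (both sides vanish when $\beta=0$). This is the composition of $t\mapsto|t|$, which is Lipschitz from $(\mathbb R,d)$ to $([0,\infty),d)$ by (a), with $s\mapsto\min(s,\gamma)$, which is Lipschitz on $[0,\infty)$ by (b) and Proposition~\ref{BT-char}; the constant depends only on $L_a$ and $C_+$.

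If $\beta<0$, then every $x_j$ ($j\ge 2$) is negative, so $\gamma=-\beta$, and $\sign\max(t,\beta)=\sign t$; hence
\[
\mu(t,x_2,\dots,x_n)=\sign(t)\,\min(|t|,-\beta)=\med(-r,t,r),\qquad r:=-\beta\ge 0,
\]
the truncation $g_r$ of $\mathbb R$ onto $[-r,r]$ (the Euclidean nearest-point retraction). Everything thus reduces to showing that, for every $r\ge 0$, $g_r\colon(\mathbb R,d)\to(\mathbb R,d)$ is Lipschitz with a constant independent of $r$; I expect this to be the main obstacle. Two preliminary facts: by (a) the map $x\mapsto -x$ is an $L_a$-Lipschitz involution, hence $L_a$-bi-Lipschitz and carrying $[0,\infty)$ onto $(-\infty,0]$, so both rays have bounded turning; and $x\mapsto|x|$ is $L_a$-Lipschitz.

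To estimate $d(g_r(s),g_r(t))$ for $s\le t$, I would split according to the positions of $s,t$ relative to $\pm r$ and to $0$. When $s$ and $t$ are sent to the same endpoint, or both lie in $[-r,r]$, the bound is immediate. When $g_r(s)$, $g_r(t)$ and the relevant endpoint all lie on one ray — e.g.\ $s<-r\le t\le 0$ or $0\le s\le r<t$ — bounded turning of that ray bounds $d(g_r(s),g_r(t))$ by a multiple of $d(s,t)$, since the endpoint lies on the subarc between $s$ and $t$. The remaining cases straddle $0$, and this is exactly where (c) is needed, being the only hypothesis relating the two sides. If $s<-r<r<t$, then $g_r(s)=-r$, $g_r(t)=r$, and (c) gives $d(-r,r)\le C\,d(s,t)$ directly. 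If $s<-r$ and $0<t\le r$, insert $-t$: since $s\le -t$, (c) yields $d(-t,t)\le C\,d(s,t)$, while $d(-r,-t)\le L_a\,d(r,t)$ and, because $t\le r\le|s|$, bounded turning of $[0,\infty)$ with the bound for $|\cdot|$ gives $d(r,t)\le C_+L_a\,d(s,t)$; adding these bounds $d(-r,t)$. The symmetric case $-r\le s<0$, $t>r$ is handled the same way, inserting $-s$ and applying (c) to $d(s,-s)$. Collecting the cases, $d(g_r(s),g_r(t))\le(C_+L_a^2+C)\,d(s,t)$ uniformly in $r$, which completes the proof. The hard point, as indicated, is this last estimate: $g_r$ is $1$-Lipschitz for the Euclidean metric but need not be for a general $d$ on $\mathbb R$, and hypothesis (c) has to be combined with the one-sided bounded turning in just the right configuration to bridge the origin.
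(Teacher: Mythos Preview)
Your proof is correct and follows essentially the same route as the paper: reduce to one variable by symmetry, split according to the sign of $\max_{j\ge 2}x_j$ (the paper uses $c=\sign\max_{j\ge 2}x_j\in\{-1,0,1\}$, you use the sign of $\beta$), dispose of the nonnegative case via Proposition~\ref{BT-char}, and in the negative case recognize the one-variable map as the truncation onto $[-r,r]$ and handle the cross-zero configurations with hypothesis~(c) combined with one-sided bounded turning. The only difference is cosmetic: where the paper packages the same-ray estimates as ``$\nu$ is Lipschitz on each halfline'' and then reflects the short point across $0$, you organize the case split by the positions of $s,t$ relative to $\{-r,0,r\}$ and carry out the bounded-turning step explicitly; the underlying inequalities are the same.
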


\begin{proof} It suffices to prove that $\mu$ is Lipschitz with respect to $x_1$. Let $b=\min_{2\le k\le n} |x_k| $ and $c=\sign \max_{2\le k\le n} x_k$. The formula~\eqref{eq-n-lip-mean} reduces to the map $\nu(x)=\min(|x|, b)\max(\sign x, c)$ where we write $x$ for $x_1$. There are three cases based on the value of $c$.

If $c=1$, then $\nu(x)= \min(|x|, b)$ and this map is Lipschitz on $X$ by virtue of \eqref{metric-ax1}-\eqref{metric-ax2} and Proposition~\ref{BT-char}.

If $c=0$, then $b=0$ and therefore $\nu(x)=0$ for all $x\in X$.

Suppose $c=-1$. Then $\nu(x)=\min(|x|, b)\sign x$ which is a Lipschitz function on the intervals where $\sign x$ is constant. Also, $\nu(x)=x$ on the interval $[-b, b]$. Thus, it remains to estimate $d(\nu(x), \nu(x'))$ when $x$ and $x'$ have opposite signs and $\max(|x|,|x'|)>b$. If both $|x|$ and $|x'|$ are greater than $b$, then $\nu(\{x,x'\}) = \{-b,b\}$ and we have $d(\nu(x), \nu(x'))\le Cd(x, x')$ by virtue of~\eqref{metric-ax3}. If $|x|>b\ge |x'|$, then we obtain $d(x', -x')\le Cd(x, x')$ from~\eqref{metric-ax3}. Hence
\[
d(\nu(x), \nu(x')) = d(\nu(x), x')
\le Cd(x, x') + d(\nu(x), -x')
=Cd(x, x') + d(\nu(x), \nu(-x'))
\]
Since $x$ and $-x'$ lie in the same halfline where $\nu$ is known to be Lipschitz, the term $d(\nu(x), \nu(-x'))$ is bounded by a multiple of $d(x,-x')$. Finally, $d(x, -x')\le (C+1)d(x,x')$ by a combination of~\eqref{metric-ax3} and the triangle inequality. The proof is complete. 
\end{proof}

Theorem~\ref{two-quasiarcs} is a special case of the following result. 

\begin{corollary}\label{cor-two-quasiarcs}
Suppose that $f\colon \mathbb R\to\mathbb R$ is a continuous even function which is increasing for $x>0$. Then its graph $\Gamma=\{(x,f(x))\colon x\in\mathbb R\}\subset \mathbb R^2$ admits a Lipschitz retraction $\Gamma(n)\to \Gamma$ and therefore has a Lipschitz $n$-mean for every $n\ge 2$. 
\end{corollary}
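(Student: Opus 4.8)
The plan is to present $\Gamma$ as $(\mathbb R, d)$ for a suitable metric $d$, verify that $d$ satisfies the three hypotheses of Proposition~\ref{two-quasiarcs-abstract}, and then upgrade the resulting $n$-mean to a retraction by means of Lemma~\ref{set-symm-means}. I would let $\phi\colon \mathbb R\to\Gamma$ be the parametrization $\phi(x)=(x,f(x))$ and define $d(x,y)=|\phi(x)-\phi(y)|=\sqrt{(x-y)^2+(f(x)-f(y))^2}$. This is a metric on $\mathbb R$ (the pullback of the Euclidean metric by the injection $\phi$), and $\phi$ is by construction an isometry of $(\mathbb R,d)$ onto $\Gamma$. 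Hence it is enough to check conditions \eqref{metric-ax1}--\eqref{metric-ax3} of Proposition~\ref{two-quasiarcs-abstract} for this $d$.

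Each check is short. For \eqref{metric-ax1}: since $f$ is even, $\phi(-x)=(-x,f(x))$ is the Euclidean reflection of $\phi(x)$ across the $y$-axis, so $x\mapsto-x$ is a $d$-isometry; and for $x\mapsto|x|$ the only case to examine is $x\le 0\le y$, where, writing $u=-x\ge 0$ and $v=y\ge 0$ and using $f(-u)=f(u)$, one has $d(|x|,|y|)^2=(u-v)^2+(f(u)-f(v))^2\le(u+v)^2+(f(u)-f(v))^2=d(x,y)^2$, so $x\mapsto|x|$ is $1$-Lipschitz. For \eqref{metric-ax2}: if $0\le s\le t$ lie in a subinterval $[x,y]\subset[0,\infty)$, then monotonicity of $f$ on $[0,\infty)$ gives $0\le t-s\le y-x$ and $0\le f(t)-f(s)\le f(y)-f(x)$, so $d(s,t)\le d(x,y)$; thus the subarc of $[0,\infty)$ joining $x$ to $y$ has $d$-diameter $d(x,y)$, i.e. $[0,\infty)$ has bounded turning with constant $1$. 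For \eqref{metric-ax3}: if $x\le -z\le z\le y$, then evenness gives $d(-z,z)=2z$, while $d(x,y)\ge|y-x|\ge 2z$, so \eqref{metric-ax3} holds with $C=1$.

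Once \eqref{metric-ax1}--\eqref{metric-ax3} are verified, Proposition~\ref{two-quasiarcs-abstract} provides the Lipschitz $n$-mean $\mu$ of~\eqref{eq-n-lip-mean}. The last step is the observation that $\mu(x_1,\dots,x_n)$ depends only on $\min_k|x_k|$ and $\max_k x_k$, hence only on the \emph{set} $\{x_1,\dots,x_n\}$; therefore $\mu$ satisfies the set-symmetry condition~\eqref{eq-set-symm}, and Lemma~\ref{set-symm-means} converts it into a Lipschitz retraction $\Gamma(n)\to\Gamma$, which in turn is a Lipschitz $n$-mean on $\Gamma$. I do not expect a genuine obstacle here: the proof is essentially a verification. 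The two points that need a little care are the Lipschitz bound for $x\mapsto|x|$, which relies on the evenness of $f$, and the bounded turning of $[0,\infty)$, which is precisely where the monotonicity of $f$ on $(0,\infty)$ is indispensable — for an $f$ oscillating with growing amplitude, \eqref{metric-ax2} and hence the conclusion would fail.
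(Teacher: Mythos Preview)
Your proof is correct and follows exactly the same approach as the paper: pull back the Euclidean metric to $(\mathbb R,d)$ via the parametrization, verify the hypotheses of Proposition~\ref{two-quasiarcs-abstract}, and then invoke Lemma~\ref{set-symm-means} using the set-symmetry of the mean in~\eqref{eq-n-lip-mean}. The paper merely asserts that $d$ ``satisfies the assumptions of Proposition~\ref{two-quasiarcs-abstract}'' without writing out the checks of \eqref{metric-ax1}--\eqref{metric-ax3}; your explicit verifications are all correct and supply that detail.
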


\begin{proof} The curve $\Gamma$ is isometric to the real line equipped with the metric 
\[d(a, b) = 
\sqrt{(a-b)^2 + (f(a)-f(b))^2},\] which satisfies the assumptions of Proposition~\ref{two-quasiarcs-abstract}. The mean constructed in~\eqref{eq-n-lip-mean} satisfies~\eqref{eq-set-symm} and therefore defines a retraction $\Gamma(n)\to \Gamma$.
\end{proof}

\begin{remark}\label{rem-not-BT}
The curve $\Gamma$ in Corollary~\ref{cor-two-quasiarcs} need not have bounded turning, as is demonstrated by the cusp curve $y=\sqrt{|x|}$ and the parabola $y=x^2$.  
\end{remark}

Having obtained some sufficient conditions for the existence of a Lipschitz mean, we turn to necessary ones. 

\begin{lemma}\label{necessary-for-Lip-mean}
Suppose that $X$ is a path-connected space with an $L$-Lipschitz mean. Then for any two points $a,b\in X$ there is a curve $\gamma\colon [0, 2]\to X$ such that $\gamma(0)=a$, $\gamma(2)=b$, and 
\begin{equation}\label{eq-symm-curve}
d(\gamma(t), \gamma(t+1))\le L\, d(a, b),\quad 0\le t\le 1.     
\end{equation}
\end{lemma}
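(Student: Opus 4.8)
The plan is to produce the curve $\gamma$ by interpolating between $a$ and $b$ using a path $\Gamma$ in $X$ together with the Lipschitz mean $\mu$, in such a way that the "halfway shift" map $t\mapsto \gamma(t+1)$ moves points by a controlled amount. First I would use path-connectedness to fix a continuous $\Gamma\colon[0,1]\to X$ with $\Gamma(0)=a$ and $\Gamma(1)=b$. The natural candidate is to set $\gamma(t)=\mu(\Gamma(t/2),b)$ for $t\in[0,1]$ and $\gamma(t)=\mu(a,\Gamma(t/2))$ for $t\in[1,2]$ — or something in this spirit — so that $\gamma$ starts at $\mu(a,b)$-ish values; but one must be careful to get the endpoint conditions $\gamma(0)=a$, $\gamma(2)=b$ exactly right. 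A cleaner choice: on $[0,1]$ let $\gamma(t)=\mu\!\left(a,\Gamma(t)\right)$ reparameterized, and on $[1,2]$ let $\gamma(t)=\mu\!\left(\Gamma(t-1),b\right)$, after checking that the two pieces agree at $t=1$, i.e. $\mu(a,b)=\mu(a,b)$ — which is automatic if $\Gamma(1)=b$ on the first piece evaluates to $\mu(a,b)$ and $\Gamma(0)=a$ on the second piece also gives $\mu(a,b)$. Symmetry of $\mu$ is what makes the two formulas match.

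Next I would verify the endpoint values: at $t=0$, $\gamma(0)=\mu(a,\Gamma(0))=\mu(a,a)=a$ by idempotence; at $t=2$, $\gamma(2)=\mu(\Gamma(1),b)=\mu(b,b)=b$. Continuity of $\gamma$ follows from continuity of $\mu$ and of $\Gamma$, together with the matching at $t=1$. The core estimate is~\eqref{eq-symm-curve}: for $t\in[0,1]$ we have $\gamma(t)=\mu(a,\Gamma(t))$ and $\gamma(t+1)=\mu(\Gamma(t),b)$, so
\[
d(\gamma(t),\gamma(t+1)) = d\bigl(\mu(a,\Gamma(t)),\,\mu(b,\Gamma(t))\bigr) \le L\, d(a,b),
\]
where the inequality is precisely the $L$-Lipschitz property of $\mu$ in its first argument, with the second argument held fixed at $\Gamma(t)$. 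This is the whole point of the construction: the "shift by $1$" corresponds exactly to changing one argument of $\mu$ from $a$ to $b$.

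The main obstacle — really the only thing requiring care — is arranging the bookkeeping so that all three demands (correct endpoints, continuity across $t=1$, and the shift estimate holding for \emph{all} $t\in[0,1]$) are met simultaneously by a single formula; a naive splicing can satisfy any two of the three but break the third, and the symmetry $\mu(a,b)=\mu(b,a)$ is the ingredient that reconciles them. I would also double-check the degenerate case $a=b$, where the estimate is trivial and $\gamma$ can be taken constant (or the general formula still works since $d(a,b)=0$ forces $\gamma(t)=\gamma(t+1)$ by the same inequality). No compactness or bounded-turning hypothesis is needed here; only path-connectedness and the Lipschitz mean are used, so the lemma is a soft consequence of the definitions, setting up the obstruction arguments that presumably follow.
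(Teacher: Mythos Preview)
Your proposal is correct and is essentially the paper's own proof: define $\gamma(t)=\mu(\Gamma(t),a)$ on $[0,1]$ and $\gamma(t)=\mu(\Gamma(t-1),b)$ on $(1,2]$, use idempotence for the endpoints, symmetry for continuity at $t=1$, and the Lipschitz bound in one variable for~\eqref{eq-symm-curve}. The initial false starts can be pruned, but the final construction and verification match the paper exactly (up to the harmless swap of argument order in $\mu$).
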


\begin{proof} Let $\mu$ be an $L$-Lipschitz mean on $X$. Given $a, b\in X$, choose a curve $\Gamma\colon [0, 1]\to X$ such that $\Gamma(0)=a$ and $\Gamma(1)=b$. Define another curve $\gamma$ as 
\[
\gamma(t)=\begin{cases}
\mu(\Gamma(t), a),\quad & 0\le t\le 1; \\ 
\mu(\Gamma(t-1), b),\quad & 1 <  t\le 2. 
\end{cases} 
\]
The map $\gamma$ is continuous at $t=1$ because $\mu$ is symmetric. The idempotence of $\mu$ implies $\gamma(0)=a$ and $\gamma(2)=b$. The Lipschitz property of $\mu$ implies~\eqref{eq-symm-curve}.
\end{proof}

The property~\eqref{eq-symm-curve} is weaker than bounded turning, and is not as geometrically intuitive. However, it leads to a usable necessary condition for the existence of a Lipschitz mean on some planar sets. Recall that for any point $z_0\in \mathbb C$ and any curve $\gamma\colon [\alpha, \beta] \to  \mathbb C\setminus \{z_0\}$ the complex argument $t\mapsto \arg (\gamma(t)-z_0) $ has a continuous branch and therefore the change of argument 
\[\Delta_{\arg}(\gamma-z_0):= \arg (\gamma(\beta)-z_0) - \arg (\gamma(\alpha)-z_0)
\]
is well defined. 

\begin{lemma}\label{argument-and-mean} 
Suppose $z_0\in \mathbb C$ and $\gamma\colon [0, 2]\to \mathbb C\setminus \{z_0\}$ is a curve. If  
\begin{equation}\label{eq-arg-min1}
\sup_{0\le t\le 1} |\gamma(t+1)-\gamma(t)| \le  \dist(z_0, \gamma)
\end{equation}
then $\Delta_{\arg}(\gamma-z_0)$ is within $2\pi/3$ of an integer multiple of $4\pi$.
\end{lemma}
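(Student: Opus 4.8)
The plan is to reduce the statement to a uniform bound on the ``one‑step'' increment of the argument and then propagate that bound by a connectedness argument. Normalize so that $z_0=0$ and set $\delta=\dist(0,\gamma)$, which is positive because $\gamma$ is a continuous curve on the compact interval $[0,2]$ avoiding $0$. Fix a continuous branch $\phi\colon[0,2]\to\mathbb R$ of $\arg\gamma$, so that $\Delta_{\arg}(\gamma-z_0)=\phi(2)-\phi(0)$, and put $h(t)=\phi(t+1)-\phi(t)$ for $t\in[0,1]$; this $h$ is continuous.

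The crucial step is a purely geometric local estimate: for every $t\in[0,1]$ the angle $\theta_t\in[0,\pi]$ between the vectors $\gamma(t)$ and $\gamma(t+1)$ satisfies $\theta_t\le\pi/3$. Writing $a=|\gamma(t)|\ge\delta$, $b=|\gamma(t+1)|\ge\delta$, and $c=|\gamma(t+1)-\gamma(t)|\le\delta$ (this last by~\eqref{eq-arg-min1}), the law of cosines gives $\cos\theta_t=\dfrac{a^2+b^2-c^2}{2ab}\ge\dfrac{a^2+b^2-\delta^2}{2ab}$; since $ab\ge\delta^2$ one has $a^2+b^2-\delta^2\ge a^2-ab+b^2=(a-b)^2+ab\ge ab$, whence $\cos\theta_t\ge\tfrac12$ and $\theta_t\le\pi/3$. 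Equivalently, $h(t)$ lies within $\pi/3$ of an integer multiple of $2\pi$, i.e. $h([0,1])\subseteq\bigcup_{k\in\mathbb Z}[2\pi k-\pi/3,\ 2\pi k+\pi/3]$.

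To finish, observe that the intervals $[2\pi k-\pi/3,\ 2\pi k+\pi/3]$ are pairwise disjoint (consecutive centers are $2\pi$ apart while each interval has length $2\pi/3$), so by continuity of $h$ on the connected set $[0,1]$ the image $h([0,1])$ is contained in exactly one of them, say the one with center $2\pi k_0$. Then $|h(0)-2\pi k_0|\le\pi/3$ and $|h(1)-2\pi k_0|\le\pi/3$, and adding these gives $|\phi(2)-\phi(0)-4\pi k_0|=|h(0)+h(1)-4\pi k_0|\le 2\pi/3$, which is the assertion. The main obstacle is the local estimate, and specifically the realization that one must use the lower bound $|\gamma(t+1)|\ge\delta$ in tandem with $|\gamma(t)|\ge\delta$: invoking the distance of only one endpoint to $z_0$ yields merely $\theta_t<\pi/2$, which is too weak for the connectedness step to pin $\phi(2)-\phi(0)$ near a multiple of $4\pi$. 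The rest is routine.
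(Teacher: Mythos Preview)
Your proof is correct and follows essentially the same route as the paper: define the continuous argument, show the one-step increment $\phi(t+1)-\phi(t)$ lies within $\pi/3$ of some multiple of $2\pi$, use connectedness to fix a single integer $k_0$, and add the estimates at $t=0$ and $t=1$. The only difference is that you supply the law-of-cosines computation for the $\pi/3$ bound explicitly, whereas the paper simply asserts it ``by virtue of~\eqref{eq-arg-min1}''.
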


\begin{proof} Let $h(t)=\arg (\gamma(t)-z_0)$. By virtue of~\eqref{eq-arg-min1}, for every $t\in [0, 1]$ the difference  $h(t+1)-h(t)$ is within $\pi/3$ of some multiple of $2\pi$. Since $h$ is continuous, there exists  $k\in \mathbb Z$ such that $|h(t+1)-h(t)-2\pi k|\le \pi/3$ for all $t\in [0, 1]$. Using this fact with $t=0$ and $t=1$, we obtain $|h(2)-h(0)-4\pi k|\le 2\pi/3$ as claimed. 
\end{proof}

Combining Lemma~\ref{necessary-for-Lip-mean} and Lemma~\ref{argument-and-mean} we arrive at the following corollary. 

\begin{corollary}\label{necessary-mean-planar}
Suppose that $E\subset \mathbb C$ is a path-connected set with an $L$-Lipschitz mean. If the points $z_0\in \mathbb C\setminus E$ and $a, b\in E$ are such that $L|a-b| \le \dist(z_0, E)$, then 
there is a curve $\gamma$ connecting $a$ to $b$ within $E$ for which $\Delta_{\arg}(\gamma-z_0)$ is within $2\pi/3$ of an integer multiple of $4\pi$. 
\end{corollary}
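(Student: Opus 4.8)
The plan is to simply splice together the two lemmas that immediately precede the corollary. First I would apply Lemma~\ref{necessary-for-Lip-mean} to the path-connected space $E$ with its $L$-Lipschitz mean: for the given points $a,b\in E$ this produces a curve $\gamma\colon[0,2]\to E$ with $\gamma(0)=a$, $\gamma(2)=b$, and the symmetry estimate~\eqref{eq-symm-curve}, namely $d(\gamma(t),\gamma(t+1))\le L\,d(a,b)$ for $0\le t\le 1$. Since $E\subset\mathbb C$ and the metric on $E$ is the Euclidean one, $d(\gamma(t),\gamma(t+1))=|\gamma(t+1)-\gamma(t)|$ and $d(a,b)=|a-b|$.

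Next I would check the hypothesis of Lemma~\ref{argument-and-mean}. Because $z_0\in\mathbb C\setminus E$ and $\gamma$ takes values in $E$, the curve $\gamma$ avoids $z_0$, so $\Delta_{\arg}(\gamma-z_0)$ is well-defined, and moreover $\dist(z_0,\gamma)\ge\dist(z_0,E)$. Combining the bound from~\eqref{eq-symm-curve} with the standing assumption $L|a-b|\le\dist(z_0,E)$ gives
\[
\sup_{0\le t\le 1}|\gamma(t+1)-\gamma(t)|\le L|a-b|\le\dist(z_0,E)\le\dist(z_0,\gamma),
\]
which is exactly~\eqref{eq-arg-min1}. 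Applying Lemma~\ref{argument-and-mean} then yields that $\Delta_{\arg}(\gamma-z_0)$ is within $2\pi/3$ of an integer multiple of $4\pi$, which is the assertion of the corollary.

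There is essentially no obstacle here: the corollary is a formal chaining of the two preceding lemmas, and the only thing to be careful about is the bookkeeping that $E$ carries the Euclidean metric (so the abstract distances in Lemma~\ref{necessary-for-Lip-mean} become moduli of complex differences) and that passing from $\dist(z_0,E)$ to $\dist(z_0,\gamma)$ only helps, since $\gamma\subset E$. If anything deserves a sentence of care, it is verifying that $\gamma$ stays in $E$ so that $\gamma-z_0$ never vanishes — but that is immediate from the construction of $\gamma$ in Lemma~\ref{necessary-for-Lip-mean}, whose values are outputs of the mean $\mu$ on pairs of points of $E$ and hence lie in $E$.
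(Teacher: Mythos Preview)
Your proposal is correct and is exactly the argument the paper intends: the text introduces the corollary simply as ``Combining Lemma~\ref{necessary-for-Lip-mean} and Lemma~\ref{argument-and-mean} we arrive at the following corollary,'' and your write-up carries out precisely that splice, with the small bookkeeping points (Euclidean metric, $\gamma\subset E$ so $z_0\notin\gamma$, and $\dist(z_0,\gamma)\ge\dist(z_0,E)$) handled correctly.
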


\begin{example}\label{long-arcs-no-mid} 
Fix $r>0$ and $T\in (\pi, 2\pi)$. If the set $E:=\{re^{it}\colon 0 \le t\le T\}$ has an $L$-Lipschitz mean, then $L > 1/(2\pi -T)$.

Indeed, if $L \le 1/(2\pi -T)$ then we can apply  Corollary~\ref{necessary-mean-planar} with $z_0=0$, $a=r$, $b=re^{iT}$, and obtain a contradiction because $T$ is not within $2\pi/3$ of an integer multiple of $4\pi$.  
\end{example}

By iterating Example~\ref{long-arcs-no-mid} we can construct a rectifiable planar arc without a Lipschitz mean. 

\begin{example}\label{ex-circles}
In the line segment $[0, 1]$, replace a small neighborhood of each point $2^{-n}$ with a circular arc $C_n$ in the upper halfplane with radius $2^{-n-2}$ and angular size $2\pi - n^{-1}$. Since these arcs are disjoint, the resulting curve $\Gamma$ is a Jordan arc. It is rectifiable because $\sum 2^{-n-2}<\infty$. However, $\Gamma$ admits no Lipschitz mean, as one can see by applying the reasoning in Example~\ref{long-arcs-no-mid} to each arc $C_n$. 
\end{example}

We saw in Corollary~\ref{BT-char-mix} that the existence of a Lipschitz mixer is a quasisymmetrically invariant property of  metric intervals. This invariance does not hold for Lipschitz means.

\begin{example}\label{ex-mean-not-qs-inv} Let $\gamma$ be the parabola $y=x^2$ in $\mathbb R^2$. Fix $0<\alpha<1/2$ and consider the quasisymmetric map $f$ from~\eqref{eq-power-map}. The curve $\gamma$ has a Lipschitz mean by Corollary~\ref{cor-two-quasiarcs} but $f(\gamma)$ does not. 
\end{example}

\begin{proof} The curve $\Gamma=f(\gamma)$ has a parametric equation 
\[\Gamma(x) = \frac{x+ix^2}{(x^2+x^4)^{(1-\alpha)/2}},\quad x\in \mathbb R,
\]
which shows that $|\re \Gamma(x)|\le |x|^{2\alpha-1}\to 0$ as $|x|\to \infty$. Consequently,  $|\Gamma(x)-\Gamma(-x)|\to 0$ as $|x|\to\infty$. Suppose that $\Gamma$ has an $L$-Lipschitz mean. Choose $x$ large enough so that $L |\Gamma(x)-\Gamma(-x)| \le \dist(i, \Gamma)$. Note that as  $|x|\to \infty$, the change of $\arg(z-i)$ on the curve $\Gamma([-x, x])$ approaches $2\pi$. Applying 
Corollary~\ref{necessary-mean-planar} to the points $i$,  $\Gamma(-x)$ and $\Gamma(x)$, we obtain a contradiction.
\end{proof}

\section{Local Lipschitz mixers and means on metric circles}\label{sec-mixers-circles} 

Theorem~\ref{intro-quasicircles-mixers} is a special case of the following result. 

\begin{theorem}\label{thm-quasicircles-mixers}
Suppose that $X$ is a metric space homeomorphic to the circle $S^1$. The following properties are related as \eqref{prop-QH}$\implies$\eqref{prop-semilocal}$\implies$\eqref{prop-QS}$\iff$\eqref{prop-local}:  
\begin{enumerate}[(i)]
    \item\label{prop-QH} $X$ has a quasihomogeneous parameterization by $S^1$; 
    \item\label{prop-semilocal} $X$ admits a semilocal Lipschitz mixer;
    \item\label{prop-QS} $X$ has bounded turning; 
    \item\label{prop-local} $X$ admits a local Lipschitz mixer.
\end{enumerate}
\end{theorem}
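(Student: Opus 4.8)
The plan is to establish the four implications (i)$\implies$(ii), (ii)$\implies$(iii), (iii)$\implies$(iv), (iv)$\implies$(iii) separately; throughout I use that $X$, being homeomorphic to $S^1$, is compact and connected. For \emph{(i)$\implies$(ii)} I would first note that the round circle $S^1\subset\mathbb R^2$ admits a semilocal Lipschitz mixer: the coordinate-wise median~\eqref{eq-coordinate-median} is a global (hence semilocal) Lipschitz mixer on $\mathbb R^2$, the radial projection $z\mapsto z/|z|$ is a Lipschitz retraction of the annulus $U=\{1/2<|z|<3/2\}$ onto $S^1$, and $\dist(S^1,\mathbb R^2\setminus U)>0$, so Lemma~\ref{LNR-implies-mixer} applies. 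Since $S^1$ is compact, Proposition~\ref{prop:qhmaps} transports this semilocal mixer along a quasihomogeneous parameterization $f\colon S^1\onto X$, producing a semilocal Lipschitz mixer on $X$.

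For \emph{(ii)$\implies$(iii)} and \emph{(iv)$\implies$(iii)} I would localize the proof of Proposition~\ref{path-conn-BT}. Because $X$ is compact, for pairs $a,b$ with $d(a,b)$ bounded below one may simply take $E=X$ in Definition~\ref{def-metric-BT}, so only small $d(a,b)$ matters. Given a semilocal mixer $\sigma\colon\widetilde\Delta_r(X^3)\to X$ and $a,b$ with $d(a,b)\le r$: for any path $\Gamma$ from $a$ to $b$ the triple $(a,b,\Gamma(t))$ lies in $\widetilde\Delta_r(X^3)$ automatically (since $d(a,b)\le r$), so $\gamma(t)=\sigma(a,b,\Gamma(t))$ is a curve from $a$ to $b$ with $d(\gamma(t),a)\le\Lip(\sigma)\,d(a,b)$ by~\eqref{eq-mixer-displacement}, and its image is a connected set of diameter $\le2\Lip(\sigma)\,d(a,b)$ containing $a$ and $b$. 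For a \emph{local} mixer $\sigma\colon\Delta_r(X^3)\to X$ one needs a path along which all triples $(a,b,\cdot)$ are admissible, so I would first prove the topological fact that for every $\delta>0$ there is $\eta>0$ such that any two points at distance $\le\eta$ are joined by a subarc of $X$ of diameter $\le\delta$. (If not, choose $a_n,b_n$ with $d(a_n,b_n)\to0$ but both subarcs between them of diameter $>\delta$; by compactness $a_n,b_n\to p$, but an arc neighborhood of $p$ of diameter $<\delta$ eventually contains a subarc joining $a_n$ to $b_n$ --- a contradiction.) Applied with $\delta=r$, this gives for $d(a,b)\le\eta$ a subarc $J$ from $a$ to $b$ with $\diam J\le r$, so every triple $(a,b,x)$ with $x\in J$ is admissible and $\gamma(x)=\sigma(a,b,x)$ again yields the required connected set.

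For \emph{(iii)$\implies$(iv)} I would build a local median from charts. Let $C$ be the bounded turning constant of $X$; fix a finite cover of $X$ by open subarcs $U_1,\dots,U_m$ of diameter less than $\frac13\diam X$. For $x,y$ in one $U_j$, the $X$-subarc lying in $U_j$ has diameter $<\frac13\diam X$ while the complementary $X$-subarc has diameter $>\frac23\diam X>Cd(x,y)$; hence the subarc supplied by bounded turning is the one inside $U_j$, which shows each $U_j$ (subspace metric) is a metric arc of bounded turning with constant $C$ and that the $U_j$-subarc between two of its points is the short $X$-subarc. By Proposition~\ref{BT-char} the order-median $\med_j$ on $U_j^3$ is $L_0$-Lipschitz with $L_0=L_0(C)$. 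Crucially $\med_j(x_1,x_2,x_3)$ has the chart-independent description ``the argument lying on the short $X$-subarc between the other two,'' so $\med_j$ and $\med_k$ agree on $(U_j\cap U_k)^3$. Let $\delta$ be a Lebesgue number of the cover and put $r=\delta/4$. For $(x_1,x_2,x_3)\in\Delta_r(X^3)$ the set $\{x_1,x_2,x_3\}$ has diameter $\le r<\delta$, hence lies in some $U_j$; define $\sigma(x_1,x_2,x_3)=\med_j(x_1,x_2,x_3)$, which is well defined by the agreement above and inherits absorption from $\med_j$. For the Lipschitz bound I split cases: if two triples are at product-distance $\le r$, their six coordinates lie in a single $U_j$ (their union has diameter $\le3r<\delta$), so $L_0$ applies; if the product-distance exceeds $r$, the triangle inequality with $\sigma(x_1,x_2,x_3)\in\{x_1,x_2,x_3\}$ and $\diam\{x_1,x_2,x_3\}\le r$ gives a bound with constant $3$. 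Thus $\sigma$ is a local Lipschitz mixer.

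I expect the delicate part to be the bookkeeping in (iii)$\implies$(iv): choosing the chart diameters, the Lebesgue number, and the final radius $r$ so that the charts genuinely induce the \emph{same} median and so that nearby triples live in a common chart, all while keeping the Lipschitz constant uniform. The topological shrinking lemma used in (iv)$\implies$(iii) is the other substantive ingredient; the remaining steps are direct adaptations of Propositions~\ref{BT-char} and~\ref{path-conn-BT}.
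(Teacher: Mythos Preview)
Your proposal is correct and follows essentially the same route as the paper: (i)$\Rightarrow$(ii) and (ii)$\Rightarrow$(iii) are identical to the paper's; for (iv)$\Rightarrow$(iii) the paper argues by contradiction (sequences $a_n,b_n\to p$, then a small arc through $p$ on which $\sigma$ is a global mixer) which is exactly the contrapositive of your ``shrinking lemma'' combined with the curve $t\mapsto\sigma(a,b,\Gamma(t))$.

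The one organizational difference worth noting is in (iii)$\Rightarrow$(iv). The paper avoids charts entirely: it takes $r=(\diam X)/(9C)$, observes that any triple of diameter $\le r$ sits in \emph{some} arc of diameter $\le Cr\le\tfrac19\diam X$, defines $\sigma$ as the median there, and for the Lipschitz estimate simply puts two nearby triples (product distance $<r$) into \emph{some} arc of diameter $\le\tfrac13\diam X$. Because the order--median is intrinsic (it is, as you say, the point lying on the short $X$--subarc between the other two), no chart--compatibility check is needed. Your chart/Lebesgue--number packaging is equivalent but carries one small numerical slip: the bound $\diam U_j<\tfrac13\diam X$ is not enough to force ``complementary subarc has diameter $>\tfrac23\diam X>C\,d(x,y)$'' when $C\ge2$, since $C\,d(x,y)$ can be as large as $C\diam U_j$. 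Taking $\diam U_j<\tfrac{1}{3C}\diam X$ (as the paper effectively does with $r=\diam X/(9C)$) fixes this; you correctly flagged this as the delicate bookkeeping.
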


\begin{remark} The properties listed in Theorem~\ref{thm-quasicircles-mixers} can be quantified as follows: property \eqref{prop-QH} by the function $\eta$ in~\eqref{eq:qh-map}; properties \eqref{prop-semilocal} and ~\eqref{prop-local} by the Lipschitz constant of the mixer and the relative size $r/\diam X$ of its domain (see Definition~\ref{def-local-mixer}); finally, \eqref{prop-QS} can be quantified by the bounded turning constant $C$. The proof of Theorem~\ref{thm-quasicircles-mixers} will show that the forward implications \eqref{prop-QH}$\implies$\eqref{prop-semilocal}$\implies$\eqref{prop-QS}$\implies$\eqref{prop-local} are such that the quantitative data involved in the conclusion depend only on the data in the assumption. However, this is not so for the converse implication \eqref{prop-local}$\implies$\eqref{prop-QS}, as we will see in Example~\ref{no-quant-BT}.
\end{remark}

In the proof of Theorem~\ref{thm-quasicircles-mixers}
we will encounter a slight complication in that the Lipschitz estimate 
\begin{equation}\label{eq-Lip-property}
d(\sigma(a, b, c), \sigma(a', b', c'))\le 
L(d(a, a')+d(b,b')+d(c,c')) 
\end{equation}
for (semi)local mixers does not reduce to the case when only one of three variables changes. 

\begin{proof}[Proof of Theorem~\ref{thm-quasicircles-mixers}] 

\eqref{prop-QH}$\implies$\eqref{prop-semilocal}. By Lemma~\ref{LNR-implies-mixer} the unit circle $S^1\subset \mathbb R^2$ with the metric inherited from $\mathbb R^2$ admits a semilocal Lipschitz mixer. By Proposition~\ref{prop:qhmaps} the same holds for any quasihomogeneous image of $S^1$. 

\eqref{prop-semilocal}$\implies$\eqref{prop-QS}. Suppose that $\sigma\colon \widetilde{\Delta}_R(X^3)\to X$ is an $L$-Lipschitz semilocal mixer.  Given distinct points $a, b\in X$ we consider two cases.  If $d(a, b)\le r$, then the proof of Proposition~\ref{path-conn-BT} yields a curve from $a$ to $b$ of diameter at most $2L\,d(a, b)$. If $d(a, b)>r$, then it suffices to note that any subarc between $a$ and $b$ has diameter at most $\diam X$. Thus, $X$ is of $C$-bounded turning with the constant $C=\max(\diam(X)/r, 2L)$.

\eqref{prop-QS}$\implies$\eqref{prop-local}. Let $r = (\diam X)/(9C)$ where $C$ is the bounded turning constant. Any three points $a, b, c\in X$ with $\diam \{a, b, c\}\le r$ are contained in an arc $\gamma\subset X$ with $\diam \gamma\le C  r \le \frac{1}{9} \diam X$. Define $\sigma(a,b,c)$ to be the median of $a,b,c$ with respect to either orientation of $\gamma$.   

Suppose that $a',b',c'$ is another triple of points with $\diam \{a', b', c'\}\le r$. We may assume that $d(a, a')+d(b,b')+d(c,c')<r$, for otherwise~\eqref{eq-Lip-property} holds with $L = (\diam X)/r$. Since the set $ \{a,b,c,a',b',c'\}$ has diameter at most $3r$, it is contained in an arc $\widetilde{\gamma}\subset X$ with $\diam \widetilde{\gamma} \le \frac{1}{3} \diam X$. Observe that $\widetilde{\gamma}$ is an arc of bounded turning with the same constant $C$. This allows us to apply Proposition~\ref{BT-char} and conclude that $\sigma$ is a Lipschitz map with a constant that depends only on $(\diam X)/r$ and $C$. 

\eqref{prop-local}$\implies$\eqref{prop-QS}. Suppose that $X$ has a local Lipschitz mixer $\sigma\colon \Delta_r(X^3)\to X$ but is not of bounded turning. Then there exist two sequences $\{a_n\}, \{b_n\}$  such that $\diam \Gamma_{n} \ge n\,d(a_n, b_n)$ for any arc $\Gamma_{n}\subset X$ containing both $a_n$ and $b_n$. Since $X$ is bounded, we have $d(a_n, b_n)\to 0$. After passing to a subsequence, $a_n\to p$ and $b_n\to p$ for some $p\in X$. Let $\gamma\subset X$ be an arc which contains $p$ in its interior and satisfies $\diam \gamma\le r$. By Proposition~\ref{path-conn-BT} the arc $\gamma$ has bounded turning, which is a contradiction because $a_n, b_n\in \gamma$ for all sufficiently large $n$. 
\end{proof}

\begin{remark} The mixers constructed in the proofs of \eqref{prop-QH}$\implies$\eqref{prop-semilocal} and \eqref{prop-QS}$\implies$\eqref{prop-local} in Theorem~\ref{thm-quasicircles-mixers} are symmetric. 
\end{remark}

Two examples will demonstrate the failure of converse implications in Theorem~\ref{thm-quasicircles-mixers}. The first of them is based on Rickman's rug~\cite[Example 9.3]{Freeman13}, i.e.,  the product of a fractal with a line. Observe that the product of two spaces $X_1, X_2$ with  Lipschitz mixers $\sigma_1,\sigma_2$ also admits a Lipschitz mixer, namely $\sigma((a_1, a_2), (b_1, b_2))=(\sigma_1(a_1, b_1), \sigma_2(a_2, b_2))$. 

\begin{example}\label{semilocal-no-QH} (\eqref{prop-semilocal}$\notimplies$\eqref{prop-QH}) 
Let $X$ be the real line with the metric defined as $d(x, y)=\sqrt{|x-y|}$ when $|x|, |y|\le 1$ and $d(x, y)=|x-y|$ when either $x, y \ge 1$ or $x,y \le -1$. In the remaining cases the metric is defined by gluing~\cite[\S3.1]{BBI}, i.e., it is the maximal metric compatible with the formulas above. The space $X$ has bounded turning with constant $C=1$. By Corollary~\ref{BT-char-mix} $X$ admits a Lipschitz mixer, therefore so does the product $X\times \mathbb R$.  
 
Let $\Gamma\subset X\times \mathbb R$ be the boundary of the rectangle with vertices $(\pm 3, \pm 2)$. A neighborhood of $\Gamma$ can be retracted onto $\Gamma$ by a Lipschitz map such as 
\[
(x, y) \mapsto \begin{cases}
(x, 2\sign y) & \text{when }|x|\le 1 \\ 
(1, 0) + \frac{2}{\max(|x-1|, |y|)}(x-1, y) & \text{when } x> 1 \\ 
(-1, 0) + \frac{2}{\max(|x+1|, |y|)}(x+1, y) & \text{when } x<-1  \end{cases}
\]
which is illustrated in Figure~\ref{fig-rectangle}. By Lemma~\ref{LNR-implies-mixer} the set $\Gamma$ admits a semilocal Lipschitz mixer. 

Suppose to the contrary that there exists a  quasihomogeneous parameterization $\varphi\colon S^1\to \Gamma$. Conjugating the rotations of $S^1$ by $\varphi$, we obtain a 1-parameter group of bi-Lipschitz maps which acts transitively on $\Gamma$ (see~\cite[Lemma 4.4]{GhamsariHerron} or~\cite{Bishop01}). However, the construction of $\Gamma$ ensures that there is no bi-Lipschitz map $f\colon \Gamma\to \Gamma$ such that $f(3, 0) = (0,2)$. 
\end{example}

\begin{figure}[h]
    \centering
    \includegraphics[width=0.4\textwidth]{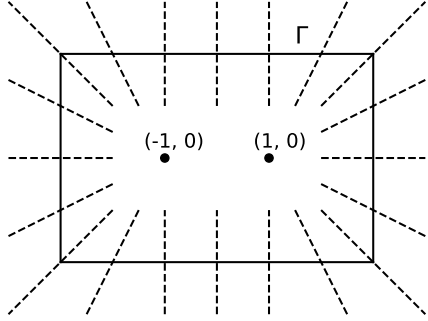}
    \caption{Lipschitz neighborhood retraction in Example~\ref{semilocal-no-QH}}
    \label{fig-rectangle}
\end{figure}

\begin{example}\label{example-vertex-snowflake} (\eqref{prop-QS}$\notimplies$\eqref{prop-semilocal}) Modify the standard construction of the von Koch snowflake~\cite[Example 1.2.10]{BishopPeres} as follows. Let $S_0$ be an equilateral triangle of sidelength $1$. Mark one of its vertices as $p$. To obtain $S_{n+1}$ from $S_n$, choose an arc $\gamma_n\subset S_n$ of length $1$ with an endpoint at $p$ and replace each line segment of that arc with four line segments as in the von Koch construction. The arc $\gamma_n$ consists of $3^n$ line segments of length $3^{-n}$, each of which is replaced by $4$ line segments of length $3^{-n-1}$. The resulting sequence converges to a curve $S$ of bounded turning~\cite{Rohde}. Since each step increases the length by $1/3$, the curve $S$ is unrectifiable. 

For any point $q \in S\setminus \{p\}$ there is $N$ such that $q\notin \gamma_n$  for $n\ge N$. Indeed, otherwise the arc between $p$ and $q$ undergoes the segment replacement infinitely many times, increasing in length indefinitely. This is impossible since the replacement is done only on an arc of length $1$. Thus, $S\setminus \{p\}$ is locally rectifiable. It remains to apply Lemma~\ref{lem-no-semilocal} below.  
\end{example}

\begin{lemma}\label{lem-no-semilocal} Suppose that $\Gamma$ is an unrectifiable metric circle such that $\Gamma\setminus \{p\}$ is locally rectifiable for some point $p\in \Gamma$. Then $\Gamma$ does not have a semilocal Lipschitz mixer.   
\end{lemma}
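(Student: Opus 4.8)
The plan is to derive a contradiction from the assumption that a semilocal Lipschitz mixer $\sigma\colon\widetilde\Delta_r(\Gamma^3)\to\Gamma$ exists, by exploiting the asymmetry between the point $p$ (near which $\Gamma$ is unrectifiable) and the rest of $\Gamma$ (which is locally rectifiable). The key observation is that a semilocal mixer is defined on triples in which \emph{at least one} pair of points is close, so we can feed it a triple $(a,b,c)$ in which $a$ and $b$ are far apart (straddling $p$) but $c$ is pinned close to $a$ or $b$, and then vary $c$. Because of the absorption property $\sigma(a,b,a)=a$ and $\sigma(a,b,b)=b$, as $c$ moves along a short subarc from (near) $a$ to (near) $b$ the value $\sigma(a,b,c)$ must move from $a$ to $b$, so it must pass through $p$; and $\sigma$ being $L$-Lipschitz in $c$ controls how fast it does so.

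The concrete steps I would carry out:

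\emph{Step 1 (set-up near $p$).} Fix $r$ and $L=\Lip\sigma$. Choose a small subarc $J\subset\Gamma$ containing $p$ in its interior with $\diam J<r$, parametrize it as $J=\{x_t:-1\le t\le1\}$ with $x_0=p$, and pick two sequences of points $a_n=x_{-s_n}$, $b_n=x_{s_n}$ with $s_n\to0$, straddling $p$. For each $n$ the subarc $\gamma_n\subset J$ from $a_n$ to $b_n$ through $p$ has $\diam\gamma_n<r$, so for every $c\in\gamma_n$ the triple $(a_n,b_n,c)$ lies in $\widetilde\Delta_r(\Gamma^3)$ (the pair $(a_n,c)$ or the pair $(b_n,c)$ — in fact both — is within $r$). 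Define $g_n\colon\gamma_n\to\Gamma$ by $g_n(c)=\sigma(a_n,b_n,c)$.

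\emph{Step 2 (the image of $g_n$ is forced to contain $p$ and to be short).} By absorption, $g_n(a_n)=a_n$ and $g_n(b_n)=b_n$, so the continuous image $g_n(\gamma_n)$ is a connected subset of $\Gamma$ joining $a_n$ to $b_n$; since $a_n,b_n$ straddle $p$ and $g_n(\gamma_n)$ is a proper connected subset of the circle $\Gamma$ (it has small diameter, by~\eqref{eq-mixer-displacement}: $d(g_n(c),a_n)\le L\,d(a_n,c)\le L\diam J$), it must be a subarc passing through $p$. Moreover $g_n$ is $L$-Lipschitz on $\gamma_n$, so the subarc $g_n(\gamma_n)$ has length at most $L\cdot(\text{length of }\gamma_n)$, hence $g_n(\gamma_n)$ is a \emph{rectifiable} subarc of $\Gamma$ containing $p$, with length bounded by $L$ times $\ell(\gamma_n)$.

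\emph{Step 3 (contradiction with unrectifiability at $p$).} On the other hand, because $\Gamma$ is unrectifiable but $\Gamma\setminus\{p\}$ is locally rectifiable, every neighborhood of $p$ in $\Gamma$ contains subarcs of arbitrarily large length: if some subarc $I\ni p$ had finite length, then writing $I$ as an increasing union of compact subarcs of $\Gamma\setminus\{p\}$ together with $p$, local rectifiability would make all of them short, and combined with the two "outer" arcs of $\Gamma$ away from $p$ this would force $\Gamma$ itself to be rectifiable — contradiction. So choosing $J$ in Step 1 to have $\ell(J)$ finite is \emph{impossible}; instead, I will run the argument as follows: pick $J$ as small as we like in \emph{diameter}, but it will automatically have infinite (or at least arbitrarily large) length. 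Then I take $\gamma_n$ to be subarcs through $p$ whose length $\ell(\gamma_n)\to\infty$ while their diameter stays below $r$ (possible precisely because $\Gamma$ fails to be rectifiable near $p$ yet is locally bounded in diameter). But then $g_n(\gamma_n)$ is a subarc through $p$ of length $\le L\,\ell(\gamma_n)$ — which is no contradiction yet. To close it I instead reverse the roles: I use that $g_n$ maps the \emph{rectifiable} parts of $\gamma_n$ (the parts away from $p$) to rectifiable parts, and the only way the image can join $a_n$ to $b_n$ through $p$ is to absorb all the length that accumulates at $p$; quantifying this with the Lipschitz bound in the variable $c$ restricted to a subarc avoiding $p$ (where $\gamma_n$ has \emph{bounded} length by local rectifiability, uniformly as $n$ varies once we fix an outer scale) forces the image to avoid $p$, contradicting Step 2.

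\emph{Main obstacle.} The delicate point is Step 3: matching the local rectifiability of $\Gamma\setminus\{p\}$ against the unrectifiability of $\Gamma$ to produce, for a fixed mixer radius $r$, subarcs $\gamma_n$ through $p$ on which the Lipschitz image $g_n(\gamma_n)$ is simultaneously forced (i) to pass through $p$ and (ii) to have length controlled by the bounded length of the rectifiable pieces away from $p$. One has to choose the parametrization of $c\mapsto g_n(c)$ and split $\gamma_n=\gamma_n^-\cup\{p\}\cup\gamma_n^+$ so that $g_n$ restricted to each half lands in a fixed rectifiable neighborhood of $a_n$ resp. $b_n$ that stays away from $p$; then continuity forces $g_n$ to "jump over" $p$, which is incompatible with $g_n$ being Lipschitz in $c$. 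Making this quantitative — choosing $r$, the outer scale, and the sequences so that all the constants line up — is where the real work lies; the rest is the absorption property and~\eqref{eq-mixer-displacement}.
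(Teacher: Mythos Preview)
Your Steps 1--2 set up reasonable objects, but Step~3 does not close, and the obstacle you flag is not a matter of bookkeeping---it is a real gap. When you let $c$ run over the short arc $\gamma_n$ \emph{through} $p$, that arc is itself unrectifiable (any subarc containing $p$ is, by hypothesis), so you have no Lipschitz parametrization of $\gamma_n$ by an interval and hence no length bound on $g_n(\gamma_n)$. Your attempted fix---splitting $\gamma_n=\gamma_n^-\cup\{p\}\cup\gamma_n^+$ and using that the halves are ``rectifiable away from $p$''---does not help either: each half is a noncompact subarc of $\Gamma\setminus\{p\}$ whose length blows up as one approaches $p$, so restricting $c$ to a compact sub-subarc gives an image that need not reach $p$ at all, while letting the sub-subarc exhaust $\gamma_n^\pm$ destroys the length control. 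There is no quantitative balance to strike here; the argument as outlined cannot be completed.

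The idea you are missing is to exploit what is genuinely \emph{semi}local about the mixer: once $d(a,b)\le r$, the triple $(a,b,c)$ lies in $\widetilde\Delta_r(\Gamma^3)$ for \emph{every} $c\in\Gamma$, however far $c$ is from $a$ and $b$. So fix one small-diameter arc $\gamma\ni p$ with endpoints $a,b$ (so $d(a,b)\le\diam\gamma\le r$), and let $c$ vary over the \emph{complementary} arc $\Gamma\setminus\gamma$. That arc is a compact subset of $\Gamma\setminus\{p\}$, hence rectifiable, hence admits a Lipschitz surjection $f\colon[0,1]\to\Gamma\setminus\gamma$ with $f(0)=a$, $f(1)=b$. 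Then $g(t)=\sigma(a,b,f(t))$ is Lipschitz on $[0,1]$, so $g([0,1])$ is rectifiable; it connects $a$ to $b$; and by~\eqref{eq-mixer-displacement} applied to the pair $(a,b)$ it stays within $L\,d(a,b)\le L\diam\gamma$ of $a$, hence cannot cover the long arc $\Gamma\setminus\gamma$ once $\gamma$ is chosen small enough. Therefore $g([0,1])$ must cover $\gamma$---a rectifiable image containing an unrectifiable arc, the desired contradiction. In short: move $c$ along the long rectifiable arc, not the short unrectifiable one; this is exactly where the distinction between semilocal and local mixers enters.
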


\begin{proof} Suppose that such a mixer $\sigma \colon \widetilde{\Delta}_r(\Gamma^3)\to \Gamma$ exists. Choose a subarc $\gamma$ containing $p$ in its interior so that $\diam \gamma\le r$ and $\Lip(\sigma) \diam \gamma \le \frac13 \diam (\Gamma \setminus \gamma)$. Let $a, b$ be the endpoints of $\gamma$. Since $\Gamma\setminus \gamma$ is rectifiable, there exists a Lipschitz surjection $f\colon [0, 1]\to \Gamma\setminus \gamma$ such that $f(0)=a$ and $f(1)=b$. Let $g(t)=\sigma(a,b,f(t))$, which is defined because $d(a, b)\le r$. Note that $g$ is a Lipschitz map of $[0, 1]$ into $\Gamma$ with $g(0)=a$ and $g(1)=b$. By~\eqref{eq-mixer-displacement} we have $d(g(t), a) \le \Lip(\sigma) \diam \gamma \le \frac13 \diam (\Gamma \setminus \gamma) $ for all $t\in [0, 1]$. It follows that the range of $g$ does not cover $\Gamma\setminus \gamma$, and therefore must cover $\gamma$. This is a contradiction since $\gamma$ is unrectifiable.
\end{proof}

\begin{remark} Example~\ref{example-vertex-snowflake} highlights the difference between local and semilocal mixers. The metric circle constructed in Example~\ref{example-vertex-snowflake}  admits a local Lipschitz mixer but not a semilocal one. 
\end{remark}

\begin{example}\label{ex-tv-curve} Tukia and V\"ais\"al\"a~\cite[Example 4.12]{TukiaVaisala} gave the following example of a metric arc of bounded turning which is not a quasisymmetric image of $[0, 1]$. Consider $\ell^\infty$, the space of all bounded sequences, treated as functions $x\colon \mathbb N\to \mathbb R$, with the norm $\|x\| = \sup_{k} |x(k)|$.  Define $v_n\in \ell^\infty$ by $v_n(k)=1/n$ when $k=n$ and  $v_n(k)=0$ otherwise. Since $\|v_n\|=1/n$, the piecewise linear curve obtained by connecting each $v_n$ to $v_{n+1}$ tends to $0$. Adding the endpoint $0$ to it, one obtains a metric arc which has bounded turning and therefore admits a Lipschitz mixer. 

By also adding a line segment from $0$ to $v_1$, we get a metric circle $\Gamma$ of bounded turning. Since $\Gamma$ is unrectifiable but $\Gamma\setminus \{0\}$ is locally rectifiable, there is no semilocal Lipschitz mixer on $\Gamma$. 
\end{example}

Example~\ref{no-quant-BT} below will demonstrate that, in contrast to semilocal mixers, a local Lipschitz mixer does not provide quantitative control over the bounded turning property of a closed curve. This is why the proof of \eqref{prop-local}$\implies$\eqref{prop-QS} in Theorem~\ref{thm-quasicircles-mixers} was a nonquantitative compactness
argument. The construction of Example~\ref{no-quant-BT} will use the following elementary lemma.

\begin{lemma}\label{simple-Lip-retract}
Let $I\subset \mathbb R$ be a closed interval, possibly unbounded. Suppose that two functions $\phi\colon I\to \mathbb R$ and $\psi\colon I\to \mathbb R$ are $L$-Lipschitz and satisfy $\phi(x)\le \psi(x)$ for all $x\in I$. Then there is a Lipschitz retraction $\Phi$ of $\mathbb R^2$ onto the set $E = \{(x, y)\colon x\in I, \phi(x)\le y\le \psi(x)\}$. Moreover, $\Lip \Phi\le \sqrt{L^2+1}$. 
\end{lemma}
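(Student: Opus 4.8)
The plan is to construct $\Phi$ explicitly by composing two nearest-point-type projections, treating the ``vertical'' direction and the ``horizontal'' direction separately. First I would define a retraction onto the vertical strip $S = I\times\mathbb R$: if $I=[\alpha,\beta]$ (or a half-line), let $\pi(x,y) = (\mathrm{med}(\alpha,x,\beta),\,y)$, clamping the first coordinate to $I$. This $\pi$ is $1$-Lipschitz since clamping a single coordinate to an interval is $1$-Lipschitz and it fixes $S$. So it suffices to retract $S$ onto $E$ with Lipschitz constant $\sqrt{L^2+1}$, since $E\subset S$ and $\Phi := (\text{that retraction})\circ\pi$ will then have the same constant.

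Next, on the strip $S$, I would use the vertical clamp
\[
\Phi_0(x,y) = \bigl(x,\ \mathrm{med}(\phi(x),\,y,\,\psi(x))\bigr),\qquad (x,y)\in S,
\]
which is well-defined because $\phi(x)\le\psi(x)$, fixes $E$ pointwise, and maps $S$ onto $E$. The only thing to check is the Lipschitz bound. Fix two points $(x,y)$ and $(x',y')$ in $S$. The first coordinates contribute exactly $|x-x'|$. For the second coordinates, write $u = \mathrm{med}(\phi(x),y,\psi(x))$ and $u'=\mathrm{med}(\phi(x'),y',\psi(x'))$. Using that the median of three numbers is $1$-Lipschitz jointly in all three entries, and that $|\phi(x)-\phi(x')|\le L|x-x'|$, $|\psi(x)-\psi(x')|\le L|x-x'|$, one gets $|u-u'|\le |y-y'| + L|x-x'|$ — but this crude bound is not quite good enough, so instead I would argue more carefully: split into the case $x=x'$ (where $|u-u'|\le|y-y'|$ trivially) and estimate the genuinely two-dimensional displacement. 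The cleanest route is to bound $|u-u'|$ by first moving $y$ to $y'$ at fixed $x$ (costing at most $|y-y'|$) and then moving $x$ to $x'$ at fixed height; in the second move the clamping bounds $[\phi,\psi]$ shift by at most $L|x-x'|$, so $u$ changes by at most $L|x-x'|$. Hence $|u-u'|\le |y-y'|+L|x-x'|$.

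To convert $|u-u'|\le|y-y'|+L|x-x'|$ together with the first-coordinate displacement $|x-x'|$ into the bound $\sqrt{L^2+1}$ times $\sqrt{|x-x'|^2+|y-y'|^2}$, I would note
\[
|x-x'|^2 + |u-u'|^2 \le |x-x'|^2 + \bigl(|y-y'|+L|x-x'|\bigr)^2,
\]
and then apply Cauchy--Schwarz in the form $|y-y'|+L|x-x'| = 1\cdot|y-y'| + L\cdot|x-x'| \le \sqrt{1+L^2}\,\sqrt{|x-x'|^2+|y-y'|^2}$; combined with $|x-x'|\le\sqrt{|x-x'|^2+|y-y'|^2}$ and $1 + (1+L^2) \ge$ ... — actually the sharp way is to observe directly that the map $(s,t)\mapsto(s,\,t+Ls)$ has operator norm $\sqrt{L^2+1}$ (its matrix $\begin{psmallmatrix}1&0\\ L&1\end{psmallmatrix}$ has largest singular value $\sqrt{(L^2+2+\sqrt{L^2(L^2+4)})/2}$)—hmm, that exceeds $\sqrt{L^2+1}$. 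So the two-step estimate is lossy and I must be more careful.

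The main obstacle, then, is getting the \emph{sharp} constant $\sqrt{L^2+1}$ rather than something larger: the naive ``move vertically, then horizontally'' argument overcounts. The fix is to estimate $|u-u'|$ directly without the intermediate point. I would use the identity that for the clamp $c_{[p,q]}(y)=\mathrm{med}(p,y,q)$ one has $|c_{[\phi(x),\psi(x)]}(y) - c_{[\phi(x'),\psi(x')]}(y')|$ is bounded by the Lipschitz constant of $(x,y)\mapsto c_{[\phi(x),\psi(x)]}(y)$, and the key point is that at any point where this function is differentiable its gradient is either $(0,1)$ (when $\phi(x)<y<\psi(x)$), or $(\phi'(x),0)$, or $(\psi'(x),0)$ (when clamped), or zero — in every case a vector of norm at most $\sqrt{L^2}$ or $1$, hence at most $\sqrt{L^2+1}$; but more relevantly, pairing this with the first coordinate whose gradient is $(1,0)$, the full Jacobian of $\Phi_0$ at a.e.\ point is one of $\begin{psmallmatrix}1&0\\0&1\end{psmallmatrix}$, $\begin{psmallmatrix}1&0\\ \phi'(x)&0\end{psmallmatrix}$, $\begin{psmallmatrix}1&0\\ \psi'(x)&0\end{psmallmatrix}$, each with largest singular value at most $\sqrt{1+L^2}$. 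Since $E$ is convex in the $y$-direction and $\Phi_0$ is locally Lipschitz off the two graphs $y=\phi(x)$, $y=\psi(x)$, a standard argument (integrate the gradient along segments, or approximate $\phi,\psi$ by smooth functions) upgrades this pointwise bound to the global bound $\Lip\Phi_0\le\sqrt{1+L^2}$. I expect the bulk of the written proof to be this elementary but slightly fussy verification; everything else (well-definedness, that $\Phi_0$ fixes $E$ and lands in $E$, composing with the horizontal clamp $\pi$) is immediate.
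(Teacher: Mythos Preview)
Your construction is identical to the paper's: clamp the $x$-coordinate into $I$ by a $1$-Lipschitz map, then apply $g(x,y)=(x,\med(y,\phi(x),\psi(x)))$ on $I\times\mathbb R$. The paper justifies $\Lip g\le\sqrt{L^2+1}$ in a single clause (``the second component of $g$ is $L$-Lipschitz''), whereas your piecewise Jacobian analysis---the identity where $\phi(x)<y<\psi(x)$, and a rank-one Jacobian of operator norm at most $\sqrt{1+L^2}$ on the clamped regions, followed by the standard passage from an a.e.\ derivative bound to a global Lipschitz bound on the convex strip---is the more careful argument and is in fact needed when $L<1$, since then the second component is only $1$-Lipschitz and the paper's shortcut would yield $\sqrt{2}$ rather than $\sqrt{1+L^2}$.
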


\begin{proof}  Extend the identity map on $I$ to a $1$-Lipschitz function $f\colon \mathbb R\to I$ be the function that is locally constant on $\mathbb R\setminus I$. Then define 
\[
g(x,y) = (x, \med(y, \phi(x), \psi(x))),\quad (x, y)\in I\times \mathbb R.
\]
Since the second component of $g$ is $L$-Lipschitz, we have $\Lip g\le \sqrt{L^2+1}$. By construction $g$ is the identity map on $E$. Finally, the desired retraction $\Phi\colon \mathbb R^2\to E$ is obtained as $\Phi(x,y)=g(f(x),y)$. 
\end{proof}

\begin{example}\label{no-quant-BT} 
Define $g\colon \mathbb R^2\to \mathbb R$ by $g(x, y)=\big||x|-1\big|+|y|$. For $t>0$ let $G_t$ be the open set $\{(x, y)\colon g(x, y)<t\}$. 
When $1<t<2$, the Jordan curve $\Gamma_t = \partial G_t$ admits a $50$-Lipschitz local mixer with the domain $\Delta_{1/6}(\Gamma_t^3)$. However, its constant of bounded turning  is unbounded as $t\to 1^+$. 
\end{example}

\begin{proof} The second claim follows from the fact that the points $(0, \pm (t-1))$ divide $\Gamma_t$ into two symmetric parts of diameter $2t$.  

Let $\sigma_m$ be the median mixer~\eqref{eq-coordinate-median} on $\mathbb R^2$. 
If a triple of points $(a,b, c)$ belongs to $\Delta_{1/6}(\Gamma_t^3)$, then it is contained either in the set $U=\{(x,y)\colon |x|\ge 1\}$ or in the set $V=\{(x,y)\colon |y|\ge t-1\}$. 
In either case, the point $w:=\sigma_m(a,b,c)$ belongs to $U\cup V$. Furthermore, we have $w\notin G_{1/2}$ because
$\dist(w, \Gamma_t)\le 1/3$ by virtue of~\eqref{eq-mixer-displacement}. In conclusion, 
\begin{equation}\label{eq-range-sigma-m}
\sigma_m(\Delta_{1/6}(\Gamma_t^3))\subset (E\cup F)\setminus G_{1/2}.
\end{equation}
The set $E\cup F$ is the complement of  the rectangle $Q$ with vertices $(\pm 1, \pm (t-1))$. See Figure~\ref{fig-projection}. To complete the proof, we need a Lipschitz retraction $F\colon (E\cup F)\setminus G_{1/2}\to \Gamma_t$. The required local mixer will be $F\circ \sigma_m$ restricted to $\Delta_{1/6}(\Gamma_t^3)$.  

\begin{figure}[h]
    \centering
    \includegraphics[width=0.5\textwidth]{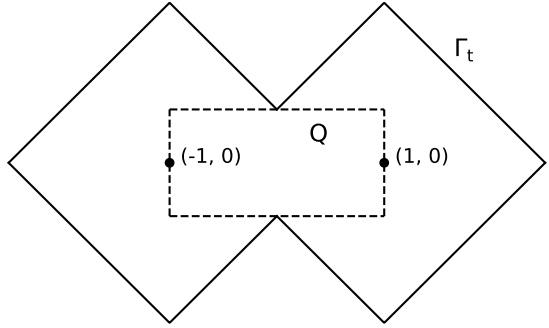}
    \caption{The curve $\Gamma_t$ of Example~\ref{no-quant-BT}}
    \label{fig-projection}
\end{figure}

Since $\partial G_t$ consists of line segments with slope $\pm 1$, by Lemma~\ref{simple-Lip-retract} there exists a retraction $\Phi\colon \mathbb R^2\to \overline{G_t}$ with $\Lip \Phi \le \sqrt{2}$. Let 
\begin{equation}\label{eq-def-F}
F(x, y) = \begin{cases}
(1,0) + \frac{t}{g(x,y)}(x-1,y), \quad & 
(x, y)\in G_t \text{ and }  x>0; \\ 
(-1,0) + \frac{t}{g(x,y)}(x+1,y),\quad & 
(x, y)\in G_t \text{ and }  x<0; \\ 
\Phi(x,y),\quad & (x,y)\notin G_t.    
\end{cases}
\end{equation}

By construction, $F$ is the identity map on $\Gamma_t$. That its range is precisely $\Gamma_t$ is clear from Figure~\ref{fig-projection}: the radial projection with respect to a point $(\pm 1, 0)$ maps each half of the set $G_t\setminus Q$ to a half of the curve $\Gamma_t$.

It remains to give a uniform bound on $\Lip F$. Since $\Lip \Phi\le \sqrt{2}$, it suffices to consider $F$ on $G_t\setminus G_{1/2}$, and more specifically on the right half of this set, where $x\ge 0$.  
Since $\Lip g = \sqrt{2}$ and $g\ge 1/2$ on the complement of $G_{1/2}$, the function 
$(x,y)\mapsto t/g(x,y)$ has the Lipschitz constant at most $t\sqrt{2}/(1/2)^2 \le 8\sqrt{2}$. It is also bounded by $2t\le 4$. The vector function $(x,y)\mapsto (x-1,y)$ is $1$-Lipschitz and is bounded by $t\le 2$ on $G_t$. This yields
$\Lip F\le (8\sqrt{2})\cdot 2 + 1\cdot 4 < 50$. 
\end{proof}

The section concludes with a brief discussion of local Lipschitz means on metric circles. 

\begin{proposition}\label{prop-quasicircles-means}
If $X$ is a metric circle of $C$-bounded turning, then there exists an $L$-Lipschitz mean $\mu\colon \Delta_r(X^2)\to X$ where $L$ and $r/\diam X$ depend only on $C$.
\end{proposition}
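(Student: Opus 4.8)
The plan is to follow the pattern of the proof of \eqref{prop-QS}$\implies$\eqref{prop-local} in Theorem~\ref{thm-quasicircles-mixers}. Fix a large absolute constant $N$ (for concreteness $N=10$) and set $r=(\diam X)/(NC)$. If $(a,b)\in\Delta_r(X^2)$ and $a\ne b$, the two subarcs of $X$ joining $a$ to $b$ have diameters $d_1\le Cd(a,b)\le(\diam X)/N$ and $d_2\ge(1-1/N)\diam X$, so the subarc of strictly smaller diameter, which I denote $\gamma_{ab}$, is canonically determined and satisfies $\diam\gamma_{ab}\le Cd(a,b)$. Now fix an orientation of $X$ (possible because $X$ is homeomorphic to $S^1$) and define $\mu(a,b)$, for $a\ne b$, to be whichever of $a,b$ comes first when $\gamma_{ab}$ is traversed positively, and $\mu(a,a)=a$. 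Thus $\mu$ is a localization of the map ``$\min$ with respect to the cyclic order''. Symmetry holds because neither $\gamma_{ab}$ nor its orientation depends on the ordering of $a,b$; idempotence is built in; and $\mu$ is continuous, since away from the diagonal the choice between ``$a$'' and ``$b$'' is locally constant, while near the diagonal the values of $\mu$ lie in $\{a,b\}$ and hence stay close to it. Note that $\mu(a,b)\in\{a,b\}$ always, which Definition~\ref{def-local-mean} permits.

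The substance of the proof is the Lipschitz bound. Given $(a,b),(a',b')\in\Delta_r(X^2)$, put $\delta=d(a,a')+d(b,b')$; if $\delta\ge r$ then trivially $d(\mu(a,b),\mu(a',b'))\le\diam X=NCr\le NC\delta$, so assume $\delta<r$. The four points then all lie in the set $\widetilde\gamma=\gamma_{ab}\cup\gamma_{aa'}\cup\gamma_{a'b'}$, which is a subarc of $X$ with $\diam\widetilde\gamma\le 3(\diam X)/N$. Because the complement of $\widetilde\gamma$ then has diameter at least $(1-3/N)\diam X$, one checks that for any $x,y\in\widetilde\gamma$ the short arc of $X$ between $x$ and $y$ lies in $\widetilde\gamma$ and equals the subarc of $\widetilde\gamma$ between $x$ and $y$; in particular $\widetilde\gamma$, viewed as a metric space, has bounded turning with constant $C$, and the order it inherits from the orientation of $X$ restricts on each of $\gamma_{ab}$ and $\gamma_{a'b'}$ to the order used in the definition of $\mu$. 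Proposition~\ref{BT-char} now gives that $\min$ is Lipschitz on $\widetilde\gamma^2$ with a constant $L=L(C)$, and since $\mu(a,b)$ and $\mu(a',b')$ coincide with this $\min$, we get $d(\mu(a,b),\mu(a',b'))\le L\delta$. This also shows $L$ and $r/\diam X$ depend only on $C$.

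The step needing the most care is the construction of the common ambient arc $\widetilde\gamma$ and the bookkeeping that turns $\mu$, on the pairs in question, into an honest $\min$ on a bounded-turning interval; this is precisely what sidesteps the difficulty, flagged in the remark before the proof of Theorem~\ref{thm-quasicircles-mixers}, that the Lipschitz condition for local maps does not reduce to one variable at a time, and it forces the scale $r$ to be comparable to $(\diam X)/C$. I expect this to be the main obstacle. The more geometrically natural alternative --- sending $(a,b)$ to a metric midpoint of $\gamma_{ab}$, or to the point of $\gamma_{ab}$ at which the two complementary sub-subarcs have equal diameter --- looks much worse, since such a point need not be unique and, even after a selection, appears not to vary Lipschitz-continuously with $(a,b)$; the endpoint-selecting mean above avoids all of this.
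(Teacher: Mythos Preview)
Your proof is correct and follows essentially the same approach as the paper: set $r$ to a fixed multiple of $(\diam X)/C$, define $\mu(a,b)=\min(a,b)$ with respect to the order induced on the short subarc by a chosen orientation of $X$, and verify the Lipschitz estimate by embedding both pairs in a common subarc $\widetilde\gamma$ of bounded turning and invoking Proposition~\ref{BT-char}. The paper's own proof is terser---it simply refers back to the argument for \eqref{prop-QS}$\implies$\eqref{prop-local} in Theorem~\ref{thm-quasicircles-mixers}---but your explicit construction of $\widetilde\gamma=\gamma_{ab}\cup\gamma_{aa'}\cup\gamma_{a'b'}$ and the verification that it inherits the constant $C$ are exactly the details that argument contains.
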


\begin{proof} Choose an orientation on $X$. Let $r = (\diam X)/(9C)$. Any two points $a, b\in X$ with $d(a,b) \le r$ are contained in an arc $\gamma\subset X$ with $\diam \gamma\le C  r \le \frac{1}{9} \diam X$. Let $\mu(a,b) = \min(a, b)$ with respect to the order on $\gamma$ induced by the orientation of $X$. The Lipschitz continuity of $\mu$ follows in the same way as in the proof of \eqref{prop-QS}$\implies$\eqref{prop-local} in  Theorem~\ref{thm-quasicircles-mixers}.
\end{proof}

In contrast to  Theorem~\ref{thm-quasicircles-mixers}, the existence of a local Lipschitz mean on a metric circle does not necessarily make it a curve of bounded turning.  

\begin{example}\label{mean-no-qs} 
Let $\Gamma=\gamma_1\cup \gamma_2$ be the Jordan curve formed by the graph $y=\sqrt{|x|}$, $-1\le x\le 1$, denoted $\gamma_1$, and the line segment from $(1, 1)$ to $(-1, 1)$, denoted $\gamma_2$.  
Due to the cusp at $(0, 0)$, this curve does not have bounded turning. However, it has a local Lipschitz mean with the domain $\Delta_{1}(\Gamma^2)$. 
\end{example}

\begin{proof} Let $C =\gamma_1\cap \gamma_2= \{(-1, 1), (1, 1)\}$. 
Let $\mu$ be the Lipschitz mean on $\gamma_1$ from the proof of Corollary~\ref{cor-two-quasiarcs}, that is, the map whose action on the $x$-coordinates is \[
(x_1, x_2) \mapsto \left(\min(|x_1|, |x_2|)\right) \sign \max (x_1, x_2). 
\]
Note that $\mu(a, b)=a$ when $a\in \gamma_1$, $b\in C$, and $|a-b|\le 1$. Extend $\mu$ by letting $\mu(a,b)=\mu(b,a)=a$ when $a\in \gamma_1$ and $b\in \gamma_2$ with $|a-b|\le 1$. This is clearly a Lipschitz function. As a special case, we have $\mu(a, b) = b$ when $a\in \gamma_2$, $b\in C$, and $|a-b|\le 1$. 

In the remaining case of $a,b\in \gamma_2$ with $|a-b|\le 1$, define 
\begin{equation}\label{eq-segment-mean}
\mu(a, b) = \frac{\dist(b, C) a + \dist(a, C) b }{\dist(a, C) + \dist(b, C)}
\end{equation}
when $a\ne b$, and $\mu(a, a)=a$. 
The denominator of~\eqref{eq-segment-mean} is at least $1$, which implies that the quotient is a Lipschitz function. The formula~\eqref{eq-segment-mean} is symmetric and satisfies $\mu(a, b)\to a$ as $b\to a$, which is consistent with the property $\mu(a,a)=a$. Finally,~\eqref{eq-segment-mean} is consistent with the property $\mu(a, b) = b$ when $a\in \gamma_2$, $b\in C$, and $|a-b|\le 1$. 
\end{proof}

\section{Questions and remarks}\label{sec-remarks}

\begin{question}\label{q-semilocal-param} If $\Gamma$ is a metric circle with a semilocal Lipschitz mixer, does it follow that $\Gamma$ is a quasisymmetric image of $S^1$, i.e., a \emph{quasicircle}?  
\end{question}

Since we know that $\Gamma$ has bounded turning, by the Tukia-Vaisala theorem~\cite[\S4]{TukiaVaisala} the question reduces to whether $\Gamma$ is doubling (Definition~\ref{def-metric-doubling}). The non-doubling curve of bounded turning in Example~\ref{ex-tv-curve} is not a counterexample since it does not admit a semilocal Lipschitz mixer.  

A theorem of Meyer~\cite{Meyer} implies that the existence of a \emph{local} Lipschitz mixer on $\Gamma$ is equivalent to having a \emph{weakly quasisymmetric} parameterization by $S^1$, see~\cite{Meyer} for the definition.

\begin{question}\label{q-mixer-mean} Does every metric space with a Lipschitz mixer also admit a Lipschitz mean? 
\end{question}

The straightforward attempt to construct a mean $\mu$ from a mixer $\sigma$, by letting $\mu(a, b)=\sigma(a, b, c)$ for some fixed $c$, succeeds only if $\sigma$ is a symmetric mixer (or at least is symmetric with respect to the variables $a, b$).

\begin{question}\label{q-lcp-mixer} Does every metric space with a Lipschitz clustering property (Definition~\ref{def-lcp}) admit a Lipschitz mixer?  
\end{question}

We know from Lemma~\ref{set-symm-means} that a Lipschitz retraction $X(2)\to X$ provides a Lipschitz mean. However, a Lipschitz retraction $X(3)\to X$ does not guarantee the existence of a Lipschitz mixer. Indeed, the parabola  $y=x^2$ admits Lipschitz retractions $X(n)\to X$ for all $n$ (Remark~\ref{rem-not-BT}) but does not have a Lipschitz mixer (Corollary~\ref{BT-char-mix}). This example falls short of answering Question~\ref{q-lcp-mixer} since a parabola does not have the Lipschitz clustering property~\cite[Example 3]{Kovalev2022}.

\bibliography{references.bib} 
\bibliographystyle{plain} 

\end{document}